\newtheorem{theorem}{Theorem}
\newtheorem{proposition}[theorem]{Proposition}
\newtheorem{corollary}[theorem]{Corollary}
\newtheorem{lemma}[theorem]{Lemma}
\theoremstyle{remark}
\newtheorem{example}[theorem]{Example}
\def\mod{{\rm\,mod\,}}
\def\N{\mathbb{N}}
\def\Z{\mathbb{Z}}
\def\k{\mathbf{k}}
\def\a{\mathbf{a}}
\def\b{\mathbf{b}}
\def\bS{\overline{S}}
\def\tS{\widetilde{S}}
\newcommand{\FSN}{\mathrm{FS}(\N)}
\title{On the ideals of some sumset semigroups}
\author{
J.\ I.\ Garc\'{\i}a-Garc\'{\i}a\footnote{
Departamento de Matem\'aticas/INDESS (Instituto Universitario para el Desarrollo Social Sostenible),
Universidad de C\'adiz, E-11510 Puerto Real (C\'{a}diz, Spain).
E-mail: ignacio.garcia@uca.es.}\\
D.\ Mar\'{\i}n-Arag\'on\footnote{
Departamento de Matem\'aticas,
Universidad de C\'adiz, E-11510 Puerto Real (C\'{a}diz, Spain).
E-mail: daniel.marin@uca.es.}
\\
A.\ Vigneron-Tenorio\footnote{
Departamento de Matem\'aticas/INDESS (Instituto Universitario para el Desarrollo Social Sostenible), Universidad de C\'adiz,
E-11406 Jerez de la Frontera (C\'{a}diz, Spain).
E-mail: alberto.vigneron@uca.es.}
}
\date{}
\begin{document}
%\linenumbers

\maketitle

\begin{abstract}
A sumset semigroup is a non-cancellative commutative monoid obtained from the sumset of finite non-negative integer sets. In this work, an algorithm for computing the ideals associated with some sumset semigroups is provided. 
Using these ideals, we study some factorization properties of sumset semigroups and some additive properties of sumsets. This approach links computational commutative algebra with additive number theory.
\end{abstract}

{\small

{\it Key words:} atomic monoid, elasticity, $h$-fold sumset, non-cancellative semigroup, power monoid, semigroup ideal, sumset.\\

2020 {\it Mathematics Subject Classification:} 11B13,
11P70,
13P25,
20M12,
20M14.
}

\section*{Introduction}

Additive number theory is the subfield of number theory concerning the study of subsets of integers and their behaviour under addition. More abstractly, the field of additive number theory includes the study of abelian groups and commutative semigroups with an operation of addition. The principal objects of study are (i)~the sumset of two subsets $A$ and $B$ of elements from an abelian group $G$, $A+B=\{a+b\mid a\in A,~b\in B\}$, and (ii)~to determine the structure and properties of the $h$-fold sumset $hA$ when the set $A$ is known. In an inverse problem, we start with the sumset $hA$ and try to deduce information about the underlying set $A$. An up-to-date reference for inverse problems can be found in \cite[Chapter~5]{tao_additive_2006}. There is a beautiful and straightforward solution of the direct problem of describing the structure of the $h$-fold sumset $hA$ for any finite set $A$ of integers and for all sufficiently large $h$ (see \cite[Theorem~1.1]{Nathanson}). This result has implications for the study of Weierstrass semigroups, such as is shown in \cite{buchweitz}.

Here, we consider the commutative semigroup whose elements are the finite subsets of $\N$, denoted by $(\FSN,+)$ with $+$ the operation defined as before. This semigroup is the power monoid of $\N$ (see \cite{FGKT}, \cite{Fan_Tringali} and the references therein). A sumset semigroup is a semigroup generated by a finite number of elements of $\FSN$. We show that the sumset semigroups are atomic reduced semigroups with finite elasticity. It is well known that finitely generated semigroups are finitely presented (see \cite{gilmer}). That is, there exists $p\in\N$ and a congruence $\sigma$ in $\N^p\times \N^p$ such that the semigroup $S$ is isomorphic to $\N^p/\sigma$. Equivalently, there exists a binomial ideal $I_S$ in the polynomial ring $\k[x_1,\dots,x_p]$ such that $S$ is isomorphic to the set of monomials in $\k[x_1,\dots,x_p]/I_S$ with the product operation. The presentation of $S$ (or a system of generators of the semigroup ideal $I_S$) provides us with a way to obtain the expressions of an element of the semigroup in terms of its generators. This can be done with Gr\"obner bases and related techniques. For instance, we can check whether the $h$-fold of any $A\in S$ can be expressed as a sumset of other elements. By using these techniques, we can build a bridge between computational commutative algebra and additive number theory.

We also present a new Python library \cite{commutative} that includes an implementation of our algorithms and the examples that illustrate it.

In this work, we show some properties of the semigroup $\FSN$ and give the ideals of some types of sumset semigroups. The work is organized as follows. In Section~\ref{preliminaries}, we present some definitions and results on Gr\"obner bases. In Section~\ref{sumset semigroups}, we introduce 
the sumset semigroups and study some of their properties. In Sections~\ref{ideals} and \ref{computing}, by using algebraic commutative algebra tools, we study the ideals of some families of sumset semigroups, thereby allowing us to introduce Algorithm~\ref{algoritmo_ideal_familia} and provide some examples.

\section{Some results on commutative algebra}\label{preliminaries}

For a field $\k$ and a set of indeterminates $\{x_1,\ldots ,x_t\}$, the polynomial ring $\k[x_1,\ldots ,x_t]$ (also denoted by $\k[X]$) is the set of polynomials in $\{x_1,\ldots ,x_t\}$ with coefficients in $\k$, that is, the set $ \{\sum_{i=1}^m a_i x_1^{\alpha_1}\cdots x_t^{\alpha_t} \mid m\in \N,\, a_i\in\k,\, \alpha_1,\ldots ,\alpha_t\in \N \}$. We denote by $X^\alpha$ the monomial $x_1^{\alpha_1}\cdots x_t^{\alpha_t}$, with $\alpha=(\alpha_1,\ldots ,\alpha_t)\in \N^t$. In this work, some results use Gr\"{o}bner basis theory and the Elimination Theorem. The necessary background can be found in \cite[$\S$2 and $\S$3]{CoxLittleOShea} but is also provided here so that the present work is self-contained.

It is well known that any ideal in a polynomial ring is finitely generated. In particular, there exists a special generating set associated with the ideals, namely a Gr\"{o}bner basis. This concept depends on the election of an order on the monomials. A monomial order $\prec$ on $\k [X]$ is a multiplicative total order on the set of monomials if for each two monomials $X^\alpha,X^\beta$ such that $X^\alpha\prec X^\beta$, then $X^\alpha X^\gamma \prec X^\beta X^\gamma$ for every monomial $X^\gamma$.

For a fixed monomial order $\prec$ on $\k[X]$, $\mathrm{In}_\prec(I)$ denotes the set of leading terms of non-zero elements of $I$, and $\langle \mathrm{In}_\prec(I)\rangle$ the monomial ideal generated by $\mathrm{In}_\prec(I)$. A subset $G$ of $I$ is a Gr\"{o}bner basis of $I$ if $\langle \mathrm{In}_\prec(I)\rangle = \langle \{ \mathrm{In}_\prec(g) \mid g\in G\}\rangle$, where $\mathrm{In}_\prec(g)$ is the leading term of $g$. An algorithm for computing a Gr\"{o}bner basis for $I$ is given in \cite[Chapter~2, $\S$7]{CoxLittleOShea}. It is also well known that Gr\"{o}bner bases of binomial ideals are sets of binomials.

Given two polynomials $f$ and $g$, their $S$-polynomial is defined by $S(f,g)= \frac{a X^\alpha}{\mathrm{In}_\prec(f)}f - \frac{a X^\alpha}{\mathrm{In}_\prec(g)} g$, where $aX^\alpha$ is the least common multiple of $\mathrm{In}_\prec(f)$ and $\mathrm{In}_\prec(g)$. Also, for $k$ non-zero polynomials $f_1, \ldots ,f_k$, we say that $S(f_i,f_j)=\sum _{l=1}^k g_lf_l$ has an {\em lcm} representation if the least common multiple of the monomial leaders of $f_i$ and $f_j$ is bigger than $\mathrm{In}_\prec(g_lf_l)$ (respect $\prec$) whenever $g_lf_l\neq 0$. Thus, we obtain another equivalent definition of a Gr\"{o}bner basis.

\begin{theorem}\cite[Chapter~2, $\S$9, Theorem~6]{CoxLittleOShea}
A basis $\{f_1,\ldots f_k \}$ of an ideal $I$ is a Gr\"{o}bner basis if and only if for every $i\neq j$, the $S$-polynomial $S(f_i,f_j)$ has an {\em lcm} representation.
\end{theorem}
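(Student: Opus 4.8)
The statement is Buchberger's criterion in its refined (lcm-representation) form, so I would prove the two implications separately, expecting the converse to carry all the difficulty. For the forward direction, assume $G=\{f_1,\ldots,f_k\}$ is a Gr\"obner basis. Since each $S(f_i,f_j)$ lies in $I$, running the division algorithm of $G$ on it produces remainder zero together with an explicit expression $S(f_i,f_j)=\sum_l g_l f_l$ in which the division algorithm guarantees $\mathrm{In}_\prec(S(f_i,f_j))\succeq \mathrm{In}_\prec(g_l f_l)$ for every $l$ with $g_l f_l\neq 0$. Writing $L=\lcm(\mathrm{In}_\prec(f_i),\mathrm{In}_\prec(f_j))$, the two pieces defining $S(f_i,f_j)$ each have leading term a scalar multiple of $L$, and these cancel by construction, so $\mathrm{In}_\prec(S(f_i,f_j))\prec L$. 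Chaining the two inequalities yields $L\succ \mathrm{In}_\prec(g_l f_l)$ for all relevant $l$, which is precisely an lcm representation.

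For the converse, assume every $S(f_i,f_j)$ admits an lcm representation; I want to show $\langle \mathrm{In}_\prec(I)\rangle=\langle \mathrm{In}_\prec(f_1),\ldots,\mathrm{In}_\prec(f_k)\rangle$, i.e.\ that $\mathrm{In}_\prec(f)$ belongs to the right-hand ideal for every nonzero $f\in I$. Fix such an $f$. Since the $f_l$ generate $I$, there is at least one representation $f=\sum_l h_l f_l$, and among all of them I would choose one minimizing $\delta:=\max_\prec\{\mathrm{In}_\prec(h_l f_l)\mid h_l f_l\neq 0\}$; this minimum exists because $\prec$ is a well-order. By the triangle-type bound for sums one always has $\mathrm{In}_\prec(f)\preceq\delta$. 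If equality holds, then $\mathrm{In}_\prec(f)=\mathrm{In}_\prec(h_{l_0} f_{l_0})$ for some index $l_0$, hence $\mathrm{In}_\prec(f)$ is divisible by $\mathrm{In}_\prec(f_{l_0})$ and lies in the monomial ideal, which is exactly what we want.

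The crux is to exclude the strict case $\mathrm{In}_\prec(f)\prec\delta$, and here I would argue by contradiction using the hypothesis. Strictness forces the terms of top degree $\delta$ among the summands $h_l f_l$ to cancel. I would isolate the leading monomials $\mathrm{In}_\prec(h_l)\,f_l$ of those summands attaining $\delta$ and apply the standard cancellation lemma: a sum of terms $c_l X^{a(l)} f_l$ all of whose products have the fixed leading monomial $\delta$, but whose total has leading monomial strictly below $\delta$, can be rewritten as a $\k$-linear combination of $S$-polynomials $S\bigl(X^{a(i)}f_i,X^{a(j)}f_j\bigr)$, each of which has leading monomial strictly below $\delta$. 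Using that $S\bigl(X^{a(i)}f_i,X^{a(j)}f_j\bigr)$ is a monomial multiple of $S(f_i,f_j)$, I substitute the assumed lcm representations of the latter, clear the monomial multiples, and recombine. This produces a new representation $f=\sum_l h_l' f_l$ with every $\mathrm{In}_\prec(h_l' f_l)\prec\delta$, contradicting the minimality of $\delta$.

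The main obstacle is exactly this last bookkeeping step. The delicate point is that after multiplying the lcm representation of $S(f_i,f_j)$ by the monomial needed to recover $S\bigl(X^{a(i)}f_i,X^{a(j)}f_j\bigr)$, one must check that the bound ``strictly below $\lcm(\mathrm{In}_\prec(f_i),\mathrm{In}_\prec(f_j))$'' translates, after scaling, into ``strictly below $\delta$'', and that reassembling all the pieces genuinely lowers the maximal degree rather than merely reshuffling it. Once this monomial-degree accounting is verified, the well-ordering of $\prec$ closes the argument, since $\delta$ cannot be decreased indefinitely.
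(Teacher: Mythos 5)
The paper does not prove this statement at all---it is quoted verbatim from Cox--Little--O'Shea (Chapter~2, \S 9, Theorem~6) as a known result, so there is no internal proof to compare against. Your argument is correct and is essentially the standard textbook proof of Buchberger's criterion in its lcm-representation form: the forward direction via the division algorithm (remainder zero plus the degree bounds it guarantees, combined with the cancellation of the two leading terms of the $S$-polynomial), and the converse via a representation minimizing the maximal leading monomial $\delta$, the cancellation lemma rewriting the top-degree part as a combination of $S$-polynomials, and substitution of the assumed lcm representations to strictly lower $\delta$, contradicting minimality; the monomial bookkeeping you flag as the delicate step does go through exactly as you describe, since multiplying an lcm representation of $S(f_i,f_j)$ by the monomial $X^{\delta-\gamma_{ij}}$ (where $X^{\gamma_{ij}}$ is the lcm of the two leading terms) turns the bound ``strictly below $X^{\gamma_{ij}}$'' into ``strictly below $X^\delta$''.
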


The above theorem allows us to prove the next result. We use this lemma in the following sections.

\begin{lemma}\label{para_teorema}
Let $I\subset \k[X, Y]$ and $J\subset \k[Z,X,Y]$ be two binomial ideals with $J$ generated by $G=\{z_1-M_1, \ldots ,z_t-M_t\}$, where each $M_i$ is a monomial in $\k[X,Y]$. Fix $\prec$ a monomial order on $\k[Z,X,Y]$ such that $x_j,y_k\prec z_i$, for every $x_j,y_k$ and $z_i$. Then, the union of $G$ and a Gr\"{o}bner basis of $I$ respect $\prec$ is a Gr\"{o}bner basis of $I+J$ respect $\prec$. Moreover, if a binomial $L-T$ belongs to $(I+J)\cap \k[X,Y]$, then $L-T\in I$.
\end{lemma}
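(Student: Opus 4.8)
The plan is to verify the first claim directly through the $S$-polynomial criterion recalled in the theorem above, and then to read off the second claim from the shape of the resulting Gr\"obner basis via the Elimination Theorem. The one structural fact I would record at the outset is that, because $\prec$ ranks every $z_i$ above all of $\k[X,Y]$, the leading term of each generator $z_i-M_i$ of $J$ is $z_i$. Write $\mathcal{G}_I=\{f_1,\dots,f_s\}$ for a Gr\"obner basis of $I$ with respect to the restriction of $\prec$ to $\k[X,Y]$; its elements lie in $\k[X,Y]$, so their leading monomials involve only the variables $X,Y$. I then apply the theorem to the candidate basis $\mathcal{G}=\mathcal{G}_I\cup G$ of $I+J$, whose $S$-polynomials split into three families.

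The observation that makes the computation collapse is a coprimality phenomenon: the leaders $z_1,\dots,z_t$ of the elements of $G$ are pairwise coprime, and each is coprime to every leading monomial of $\mathcal{G}_I$ (the latter contain no $Z$-variable). Hence for a pair $(z_i-M_i,\,z_j-M_j)$ and for a pair $(f,\,z_j-M_j)$ the least common multiple of the two leaders is simply their product, and the identities
$S(z_i-M_i,z_j-M_j)=M_j(z_i-M_i)-M_i(z_j-M_j)$ and $S(f,z_j-M_j)=M_j f-T(z_j-M_j)$, where $f=\mathrm{In}_\prec(f)-T$, are already \emph{lcm} representations: coprimality of the leaders forces each summand's leading monomial to stay strictly below the product (here one uses $M_i\prec z_i$, $M_j\prec z_j$ and $T\prec\mathrm{In}_\prec(f)$), the coefficient bookkeeping being routine. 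The remaining family, the $S$-polynomials $S(f_i,f_j)$ with both arguments in $\mathcal{G}_I$, has an \emph{lcm} representation because $\mathcal{G}_I$ is already a Gr\"obner basis of $I$ and these reductions never leave $\k[X,Y]$. By the theorem, $\mathcal{G}=\mathcal{G}_I\cup G$ is a Gr\"obner basis of $I+J$.

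For the second claim I would invoke the Elimination Theorem. Since $\prec$ eliminates the variables $Z$, the subset of $\mathcal{G}$ lying in $\k[X,Y]$ is a Gr\"obner basis of the elimination ideal $(I+J)\cap\k[X,Y]$; but every element of $G$ contains some $z_i$, so that subset is exactly $\mathcal{G}_I$. Therefore $(I+J)\cap\k[X,Y]=\langle\mathcal{G}_I\rangle=I$, which yields the stated conclusion for the binomial $L-T$ (in fact for every element of the intersection, not only binomials). A slicker route to this last part is the substitution homomorphism $\varphi\colon\k[Z,X,Y]\to\k[X,Y]$, $z_i\mapsto M_i$, fixing $X$ and $Y$: it kills $J$ and fixes $\k[X,Y]$ pointwise, so writing $L-T=a+b$ with $a\in I$ and $b\in J$ gives $L-T=\varphi(L-T)=\varphi(a)=a\in I$.

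The only genuine subtlety, and the point I would be most careful about, is the claim $\mathrm{In}_\prec(z_i-M_i)=z_i$. This is exactly where the hypothesis on $\prec$ enters, and it is indispensable: for an order under which some $M_i$ outranks $z_i$, the union $\mathcal{G}_I\cup G$ need not be a Gr\"obner basis (new $S$-polynomials with $Z$-leaders appear). Once the leaders are pinned down as the $z_i$, their coprimality removes every interaction between $G$ and $\mathcal{G}_I$, and both assertions follow with no further work.
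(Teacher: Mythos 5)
Your proof is correct, and it splits naturally into two halves relative to the paper. For the first claim you do essentially what the paper does: verify the lcm-representation criterion for the candidate basis $\mathcal{G}_I\cup G$, with the mixed pairs handled by the same explicit identity (your $S(f,z_j-M_j)=M_jf-T(z_j-M_j)$ is, up to sign, the paper's $S(z_i-M_i,L-T)=T(z_i-M_i)-M_i(L-T)$); you are in fact slightly more complete, since you justify via coprimality of the leaders that $G$ itself is a Gr\"obner basis of $J$, a fact the paper merely asserts. For the ``moreover'' claim your two routes diverge from the paper. The paper divides $L-T$ by the combined Gr\"obner basis and observes that the division process never leaves $\k[X,Y]$, so only elements of $\mathcal{G}_I$ can appear. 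Your first route, via the Elimination Theorem, is the shakiest point of the proposal: the hypothesis $x_j,y_k\prec z_i$ is a comparison of \emph{variables} only and does not by itself make $\prec$ an elimination order for $Z$ (under a graded order with $z$ ranked first one still has $x^5\succ z$), so the Elimination Theorem cannot be invoked verbatim -- though, to be fair, the same charitable reading of the hypothesis (each $z_i$ dominating the relevant monomials of $\k[X,Y]$) is already needed by both you and the paper to pin down $\mathrm{In}_\prec(z_i-M_i)=z_i$, as you correctly flag. Your second route, the retraction $\varphi\colon\k[Z,X,Y]\to\k[X,Y]$, $z_i\mapsto M_i$, closes this gap completely: it is independent of any monomial order and of the first half of the lemma, it proves the stronger equality $(I+J)\cap\k[X,Y]=I$ for \emph{all} elements rather than only binomials, and it is arguably cleaner than the paper's division argument. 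So the net assessment is: same mechanism as the paper for the Gr\"obner-basis claim, and a genuinely different, more robust argument for the elimination claim, provided you lean on the substitution homomorphism rather than the Elimination Theorem.
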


\begin{proof}
Note that $G$ is a Gr\"{o}bner basis of $J$ respect $\prec$, and consider $G'=\{g_1,\ldots ,g_h\}\subset \k[X,Y]$ a Gr\"{o}bner basis of $I$. Thus, $S(f,f')$ and $S(g,g')$ have an {\em lcm} representation for every $f,f'\in G$ and $g,g'\in G'$. Let $L-T$ be a binomial in $G'$, $z_i-M_i\in G$, and assume $L\succ T$. Hence, $S(z_i-M_i,L-T) = L(z_i-M_i)-z_i(L-T)= T(z_i-M_i)-M_i(L-T)$; that is to say, $S(z_i-M_i,L-T)$ has an {\em lcm} representation. Therefore, $G\cup G'$ is a Gr\"{o}bner basis of $I+J$.

Consider any $L-T\in I+J$ with $L-T\in \k[X,Y]$. By \cite[Chapter~2, $\S$3, Theorem~3]{CoxLittleOShea} and \cite[Chapter~2, $\S$6, Corollary~2]{CoxLittleOShea}, we have that $L-T=\sum _{i=1}^h f_ig_i $, with $\mathrm{In}_\prec(L-T)\succeq \mathrm{In}_\prec(f_ig_i)$, for $i=1,\ldots ,h$. Hence, every $f_i$ belongs to $\k[X,Y]$.
\end{proof}

A method for computing the ideal $I\cap \k[x_{l+1},\ldots ,x_t]$ (for $t>l\ge 1$) is called the {\em Elimination Theorem}.

\begin{theorem}\cite[Chapter~3]{CoxLittleOShea}
Let $I\subset \k[x_1,\ldots ,x_t]$ be an ideal and let $G$ be a Gr\"{o}bner basis of $I$ with respect to lex order where
$x_1>x_2>\cdots >x_t$. Then, for every $0\le l\le t$, the set $G\cap \k[x_{l+1},\ldots ,x_t]$ is a generating set of the ideal $I\cap \k[x_{l+1},\ldots ,x_t]$. Furthermore, $G\cap \k[x_{l+1},\ldots ,x_t]$ is a Gr\"{o}bner basis of $I\cap \k[x_{l+1},\ldots ,x_t]$.
\end{theorem}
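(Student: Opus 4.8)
The plan is to establish the stronger of the two assertions first, namely that $G_l := G \cap \k[x_{l+1},\ldots,x_t]$ is a Gr\"obner basis of $I_l := I \cap \k[x_{l+1},\ldots,x_t]$ with respect to the lex order $\prec$ (where $x_1 > \cdots > x_t$); the generating-set claim then follows for free, since a Gr\"obner basis is in particular a generating set of the ideal it is a basis of. First I would record the obvious inclusion $G_l \subseteq I_l$, which already gives $\langle G_l\rangle \subseteq I_l$.

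To prove that $G_l$ is a Gr\"obner basis of $I_l$, it suffices, by the definition of a Gr\"obner basis, to show that for every nonzero $f \in I_l$ the leading monomial $\mathrm{In}_\prec(f)$ is divisible by $\mathrm{In}_\prec(g)$ for some $g \in G_l$. Since $f \in I$ and $G$ is a Gr\"obner basis of $I$, there is some $g \in G$ whose leading term $\mathrm{In}_\prec(g)$ divides $\mathrm{In}_\prec(f)$. The whole argument therefore reduces to showing that such a $g$ in fact lies in $G_l$, i.e. involves none of the variables $x_1,\ldots,x_l$.

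The heart of the proof, and the only step where the choice of lex order with $x_1 > \cdots > x_t$ is genuinely used, is the following claim: \emph{if the leading monomial $\mathrm{In}_\prec(g)$ involves none of $x_1,\ldots,x_l$, then no monomial of $g$ does, so that $g \in \k[x_{l+1},\ldots,x_t]$.} I would prove this by contradiction. Suppose a monomial $X^\beta = x_1^{\beta_1}\cdots x_t^{\beta_t}$ of $g$ has $\beta_j > 0$ for some $j \le l$, and take $j$ minimal with this property. Then $X^\beta$ and $\mathrm{In}_\prec(g)$ both have exponent $0$ in positions $1,\ldots,j-1$ (for $\mathrm{In}_\prec(g)$ because it avoids $x_1,\ldots,x_l$, for $X^\beta$ by minimality of $j$), while at position $j$ the exponent of $X^\beta$ is $\beta_j > 0$ and that of $\mathrm{In}_\prec(g)$ is $0$. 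By the definition of lex order this forces $X^\beta \succ \mathrm{In}_\prec(g)$, contradicting that $\mathrm{In}_\prec(g)$ is the leading monomial of $g$. Applying the claim: because $\mathrm{In}_\prec(f) \in \k[x_{l+1},\ldots,x_t]$ and $\mathrm{In}_\prec(g)$ divides it, the divisor $\mathrm{In}_\prec(g)$ also involves none of $x_1,\ldots,x_l$, and hence $g \in \k[x_{l+1},\ldots,x_t]$, that is, $g \in G_l$.

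This shows $\mathrm{In}_\prec(f)$ is divisible by the leading term of an element of $G_l$, completing the proof that $G_l$ is a Gr\"obner basis of $I_l$; the statement that $G_l$ generates $I_l$ is then immediate. I expect the lex-order claim to be the only nontrivial obstacle: it is precisely the point at which the hypotheses on the monomial order and on the variable ordering $x_1 > \cdots > x_t$ enter, and the result would fail for a general monomial order. The remaining steps are a direct unwinding of the definition of a Gr\"obner basis and require no computation.
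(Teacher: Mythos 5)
Your proof is correct. The paper does not give its own argument for this statement---it is quoted as the Elimination Theorem from the cited reference---and your proof (reduce to divisibility of leading terms, then use the key lex-order fact that if $\mathrm{In}_\prec(g)$ involves none of $x_1,\ldots,x_l$ then $g\in\k[x_{l+1},\ldots,x_t]$) is precisely the standard proof of that theorem, so there is nothing to add.
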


We introduce the semigroup ideal as an important object in this work. A monoid/semigroup is a non-empty set equipped with an associative and commutative binary operation (denoted by $+$), and an identity element. A semigroup is finitely generated if there exists a finite set $A=\{a_1,\ldots ,a_t\}\subset S$ such that $S= \langle A \rangle:= \{\alpha_1 a_1+\cdots +\alpha_t a_t\mid \alpha_1,\ldots ,\alpha_t\in \N \}$ ($\alpha a$ denotes $\sum _{i=1}^\alpha a$). For a field $\k$, $S$ has associated the binomial ideal in $\k[x_1,\ldots ,x_t]$,
$$I_S=\Big\langle \Big\{ X^\alpha - X^\beta \mid \sum_{i=1}^t \alpha_i a_i = \sum_{i=1}^t \beta_i a_i \Big\}\Big\rangle.$$
This ideal is usually called the semigroup ideal of $S$, and it has an important role in studying some properties of the semigroup. Note that $I_S$ codifies the relationships among the elements of $S$. 
Associated to these ideals we have the lattice $M$ of $\Z^t$ generated by the elements $\{\alpha-\beta\mid X^\alpha-X^\beta\in I_S\}$.
We say that $I_S$ is strongly reduced whenever $M\cap \N^t=\{0\}$ (this concept was introduced in \cite{RosalesPedroUrbano}).
Define ${\cal I}_M$ the finitely generated cancellative subsemigroup of $\N^t\times \N^t$ all of whose elements $(\alpha,\beta)$ verify $\alpha-\beta\in M$. Let $\mathcal{A}({\cal I}_M)$ be the minimal system of generators of the subsemigroup ${\cal I}_M$. This minimal generating set can be computed performing the following steps:
\begin{enumerate}
    \item Compute a system of linear homogeneous equations $Ax=0$ of $M$ from its system of generators \cite[Chapter 2]{Rosales}.
    \item The Hilbert basis of $(A|-A)(x,y)=0$ is the set ${\cal A}({\cal I}_M)$. That basis can be computed by using \cite{PisonVigneron}.
\end{enumerate}

When the semigroup $S$ is a subset of $\N$ such that $\N\setminus S$ is finite, $S$ is called a numerical semigroup, and it is finitely generated. In \cite{Rosales_libro}, the authors introduce some algorithms for computing the ideals of numerical semigroups.

\section{Sumset semigroups}\label{sumset semigroups}

Let us begin this section by recalling some standard definitions in semigroup theory. Assume $S$ is a commutative semigroup, $S$ is cancellative if $x+z=y+z$ for some $x,y,z\in S$, implies $x=y$. An element $x\in S$ is a unit if $x+y=0$ for some $y\in S$. The set of units of $S$ is denoted by $\mathcal{U}(S)$. When $S\cap (-S)=\{0\}$, $S$ is named reduced. An atom in $S$ is any non-unit $x\in S$ such that there do not exist two non-units $y,z\in S$ with $x=y+z$. The semigroup is atomic if $S\setminus \mathcal{U}(S)$ is generated by its atoms. The set of atoms of $S$ is denoted by $\mathcal{A}(S)$.

Let $\mathrm{FS}(\N)$ be the set whose elements are the finite non-empty subsets of $\N$. Recall that on $\mathrm{FS}(\N)$, the binary operation $+$ is defined as $A+B=\{a+b \mid a\in A, b\in B\}$ for all $A,B\in \mathrm{FS}(\N)$. The pair $(\mathrm{FS}(\N),+)$ is a commutative monoid with identity element equal to $\{0\}$.
Every finitely generated submonoid of $(\mathrm{FS}(\N),+)$ is called a sumset semigroup.
If $A\in \mathrm{FS}(\N)$ and $\alpha\in\N$, denote by $\alpha \otimes A$ the sumset $\sum_{i=1}^\alpha A$.

The monoid $\mathrm{FS}(\N)$ satisfies the following interesting properties:
\begin{itemize}
\item since $\{1,3\}+\{1,2,3\}=\{1,2,3\}+\{1,2,3\}$, it is non-cancellative;
\item it is a reduced monoid;
% \item it has no units {\color{red}($\mathrm{FS}(\N)$ is called reduced monoid)}; that is, there are no elements $A,B\in\FSN\setminus \{0\}$ such that $A+B=\{0\}$
% ;
\item since $2\otimes \{1,2,4,5\}=2\otimes \{1,2,3,4,5\}$, it is not torsion free;
\item by Proposition 3.2 of \cite{Fan_Tringali}, this monoid is atomic.
\end{itemize}

The operation $\otimes$ has good properties, as shown in the following lemma.

\begin{lemma}
Let $A$, $B$ be in $\mathrm{FS}(\N)$ and $\alpha,\beta\in \N$, then:
\begin{enumerate}
\item $\alpha\otimes(\beta\otimes A)= (\alpha\beta)\otimes A$;
\item $\alpha\otimes (A + B)= \alpha\otimes A + \alpha\otimes B$.
\end{enumerate}
\end{lemma}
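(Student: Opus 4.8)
The plan is to observe that both identities are purely formal consequences of the fact that $(\mathrm{FS}(\N),+)$ is a commutative monoid, together with the definition $\alpha\otimes A=\sum_{i=1}^\alpha A$ as an iterated sum; the specific combinatorial structure of finite subsets of $\N$ plays no role here. The only tools I would use throughout are the associativity and commutativity of $+$, the fact that $\{0\}$ is the identity, and the convention that the empty sum (the case $\alpha=0$) equals $\{0\}$.

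First I would record an \emph{auxiliary additivity law}: $(m+n)\otimes A=m\otimes A+n\otimes A$ for all $m,n\in\N$. This is immediate from the definition, since the right-hand side is a sum of $m$ copies of $A$ followed by $n$ copies of $A$, which by associativity is exactly the sum of $m+n$ copies of $A$. This is the only place where I need to unwind the iterated-sum notation directly; note in particular that $1\otimes A=A$.

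For part (1) I would argue by induction on $\alpha$. The base case $\alpha=0$ gives $\{0\}$ on both sides. For the inductive step I write $(\alpha+1)\otimes(\beta\otimes A)=\alpha\otimes(\beta\otimes A)+(\beta\otimes A)$; the induction hypothesis rewrites the first summand as $(\alpha\beta)\otimes A$, and the auxiliary law then yields $(\alpha\beta)\otimes A+\beta\otimes A=(\alpha\beta+\beta)\otimes A=((\alpha+1)\beta)\otimes A$, as required.

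For part (2) I would again induct on $\alpha$, the base case being trivial. In the inductive step I expand $(\alpha+1)\otimes(A+B)=\alpha\otimes(A+B)+(A+B)$, apply the induction hypothesis to rewrite this as $(\alpha\otimes A+\alpha\otimes B)+(A+B)$, and then \emph{regroup} into $(\alpha\otimes A+A)+(\alpha\otimes B+B)=(\alpha+1)\otimes A+(\alpha+1)\otimes B$, using the auxiliary law with $n=1$ on each factor. The lemma is routine, and the one genuine (if mild) point to be careful about is precisely this regrouping step: it is where the commutativity of $+$ is essential, whereas everything else relies only on associativity.
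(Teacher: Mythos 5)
Your proof is correct. The paper actually states this lemma without any proof at all, treating both identities as routine consequences of the definition of $\otimes$; your induction on $\alpha$, resting on the auxiliary law $(m+n)\otimes A=m\otimes A+n\otimes A$ together with associativity and commutativity of $+$ in $(\mathrm{FS}(\N),+)$, is exactly the standard formalization of that omitted argument, and your handling of the $\alpha=0$ convention and the regrouping step is careful and sound.
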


% \begin{proof}
% \begin{enumerate}
%     \item Clearly, 
%     \begin{multline*}
%     \alpha\otimes(\beta\otimes A)=\alpha\otimes\sum_{i=1}^\beta A=\sum_{j=1}^\alpha(\sum_{i=1}^\beta A)=\\
%     =\underbrace{(\underbrace{A+A+\cdots+A)}_{\beta \textrm{ times }}+
%     \cdots+(\underbrace{A+A+\cdots+A)}_{\beta \textrm{ times}}}_{\alpha\textrm{ times}}=\underbrace{A+A+\cdots +A}_{\alpha\beta\textrm{ times }}=\\
%     =\sum_{i=1}^{\alpha\beta} A=
%     \alpha\beta\otimes A.
%     \end{multline*}

%     \item We have that 
%     $\alpha\otimes (A + B)=\sum_{i=1}^\alpha(A+B)=(\sum_{i=1}^\alpha A)+(\sum_{i=1}^\alpha B)
%     =\alpha\otimes A + \alpha\otimes B$.
% \end{enumerate}
% \end{proof}

% Let $S$ be a sumset semigroup minimally generated by $\{A_1,\ldots ,A_t\}$. We use Theorem 3 of \cite{atomicos} to prove that $S$ is atomic.
% If $B$ is such that $A_i+B=A_j$, then there exists $D\neq \{0\}$ such that $B=A_j+D$. Therefore we have that $A_i+A_j+D=A_j$ and thus $\max A_i+\max A_j+\max D=\max A_j$ which is not possible. Hence, $A_i+B\neq A_j$ and $S$ is atomic.

Let $S$ be a sumset semigroup minimally generated by $\{A_1,\ldots ,A_t\}$. By definition, the elasticity of a non-unit $A\in S$ is $\rho(A)=\sup \big\{m/n\mid  \exists a_1,\ldots,a_m,b_1,\ldots ,b_n\in \mathcal{A}(S)\text{ with } A=\sum_{i=1}^m a_i=\sum_{i=1}^n b_i\big\}$, and the elasticity of $S$ is $\rho(S)=\sup \{\rho(A)\mid A\in S\setminus\{0\}\}$. 
If there is an element in the monoid whose elasticity “reaches” that of the whole monoid, we say that the monoid has acceptable elasticity.

%{\color{red}\bf  METER DEFINICIÓN DE ACCEPTABLE ELASTICITY!!!!}

Note that the ideal associated to $S$ is 
$$I_S=\Big\langle \Big\{ X^\alpha - X^\beta \mid \sum_{i=1}^t \alpha_i \otimes A_i = \sum_{i=1}^t \beta_i \otimes A_i \Big\}\Big\rangle\subset \k[x_1,\ldots ,x_t].$$
Let $\alpha\in M\cap \N^t$ be a non-zero element, thus there exists $\beta \in \N^t$ such that $X^{\alpha+\beta }-X^\beta=X^\beta(X^\alpha-1)\in I_S$, and then $\sum _{i=1}^t (\alpha_i + \beta_i)A_i=\sum _{i=1}^t \beta_i A_i$. Therefore, $\max \sum _{i=1}^t (\alpha_i+ \beta_i)A_i = \sum _{i=1}^t \big(\alpha_i+\beta_i\big)\max A_i> \sum _{i=1}^t \beta_i \max A_i= \max \sum _{i=1}^t \beta_i A_i$, which it is not possible. Hence, $M\cap \N^t$ is $\{0\}$, and the ideal $I_S$ is strongly reduced.

Since $I_S$ is strongly reduced, we have that $S$ is an atomic reduced semigroup with finite elasticity (see Theorem 15 in \cite{atomicos}). Moreover, 
\begin{equation}\label{elasticidad}
\rho (S)= \max \left\{ \frac{\sum_{i=1}^t \alpha_i}{\sum_{i=1}^t\beta_i} \mid (\alpha,\beta)\in \mathcal{A}({\cal I}_M)\right\}.
\end{equation}
%where $\mathcal{A}(I_M)$ is the minimal system of generators of the cancellative subsemigroup of $\N^t\times \N^t$ whose elements are $(\alpha,\beta)$ that verify $X^\alpha-X^\beta\in I_M$.

If $A=\{a_1<\dots<a_n\}\in\FSN$, then $A=\{a_1\}+\{0,a_2-a_1,\dots,a_n-a_1\}$ (denote $\{0,a_2-a_1,\dots,a_n-a_1\}$ by $\tilde A$). The semigroup $S=\langle A_1,\dots,A_t\rangle$, with $A_i=\{a_{i1}<\dots<a_{it_i}\}$, is a submonoid of $\langle \{a_{11}\},\dots, \{a_{1t}\}, \tilde A_1,\dots, \tilde A_t \rangle$. Trivially, the sumset semigroup $\langle \{a_{11}\},\dots, \{a_{1t}\}\rangle$ is isomorphic to the semigroup $\langle a_{11},\dots, a_{1t}\rangle$, thus $I_{\langle a_1, \ldots ,a_s \rangle}=I_{\langle \{a_{11}\},\dots, \{a_{1t}\} \rangle}$.

\begin{proposition}\label{numerico_y_comienza_cero}
For every $\tilde S=\big\{\{a_1\},\dots,\{a_s\},\tilde A_1,\dots,\tilde A_t\big\}$, with $a_i\neq 0$ and $\min \tilde A_i=0$, we have $I_{\tilde S}= I_{\langle a_1, \ldots ,a_s \rangle} + I_{\langle \tilde A_1, \ldots ,\tilde A_t \rangle}$, where $I_{\langle a_1, \ldots ,a_s \rangle}\subset \k[x_1,\ldots ,x_s]$ and $I_{\langle \tilde A_1, \ldots ,\tilde A_t \rangle}\subset \k[y_1,\ldots ,y_t]$.
\end{proposition}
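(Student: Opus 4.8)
The plan is to prove the two inclusions separately, the heart of the matter being a clean characterization of when two monomials $X^{\alpha}Y^{\beta}$ and $X^{\gamma}Y^{\delta}$ (with $X^\alpha=x_1^{\alpha_1}\cdots x_s^{\alpha_s}$ encoding the singleton generators and $Y^\beta=y_1^{\beta_1}\cdots y_t^{\beta_t}$ encoding the $\tilde A_j$) represent the same element of $\tilde S$. First I would record two elementary facts about the operation on $\FSN$: adding a singleton is a translation, $\{c\}+B=c+B$, hence injective in $B$; and the minimum of a sumset is additive, $\min(A+B)=\min A+\min B$. Since every $\tilde A_j$ satisfies $\min\tilde A_j=0$, any sumset $\sum_j\beta_j\otimes\tilde A_j$ again has minimum $0$, while $\sum_i\alpha_i\otimes\{a_i\}=\{\sum_i\alpha_i a_i\}$ is a singleton.

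With these in hand, suppose $X^{\alpha}Y^{\beta}-X^{\gamma}Y^{\delta}\in I_{\tilde S}$, that is $\{c_1\}+B_1=\{c_2\}+B_2$, where $c_1=\sum_i\alpha_i a_i$, $c_2=\sum_i\gamma_i a_i$, $B_1=\sum_j\beta_j\otimes\tilde A_j$ and $B_2=\sum_j\delta_j\otimes\tilde A_j$. Taking minima and using $\min B_1=\min B_2=0$ forces $c_1=c_2$; cancelling the common translation then gives $B_1=B_2$. Thus $X^{\alpha}-X^{\gamma}\in I_{\langle a_1,\dots,a_s\rangle}$ and $Y^{\beta}-Y^{\delta}\in I_{\langle\tilde A_1,\dots,\tilde A_t\rangle}$, and the identity
$$X^{\alpha}Y^{\beta}-X^{\gamma}Y^{\delta}=Y^{\beta}(X^{\alpha}-X^{\gamma})+X^{\gamma}(Y^{\beta}-Y^{\delta})$$
exhibits the generator as an element of $I_{\langle a_1,\dots,a_s\rangle}+I_{\langle\tilde A_1,\dots,\tilde A_t\rangle}$. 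Since these binomials generate $I_{\tilde S}$, this yields $I_{\tilde S}\subseteq I_{\langle a_1,\dots,a_s\rangle}+I_{\langle\tilde A_1,\dots,\tilde A_t\rangle}$.

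For the reverse inclusion I would check that each defining binomial of the two summands already lies in $I_{\tilde S}$. A relation $X^{\alpha}-X^{\gamma}$ of the numerical semigroup means $\sum_i\alpha_i a_i=\sum_i\gamma_i a_i$, so via the isomorphism $\langle\{a_1\},\dots,\{a_s\}\rangle\cong\langle a_1,\dots,a_s\rangle$ already noted above, the corresponding singletons coincide in $\tilde S$; and a relation $Y^{\beta}-Y^{\delta}$ of $\langle\tilde A_1,\dots,\tilde A_t\rangle$ is literally a relation among the generators $\tilde A_1,\dots,\tilde A_t$ of $\tilde S$. Hence both generating sets sit inside $I_{\tilde S}$, giving the opposite containment and the desired equality.

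The only genuinely delicate point is the splitting step: one must be sure that equality of the two sumsets really does decompose as equality of the singleton parts together with equality of the $\tilde A$-parts, with no interference between the two blocks of variables. This is exactly where the hypotheses $\min\tilde A_j=0$ (so that the minimum pins down the singleton contribution) and the translation/injectivity property are used; without the normalization $\min\tilde A_j=0$ the minima would not separate the two contributions and the decomposition would break down.
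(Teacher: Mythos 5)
Your proof is correct and follows essentially the same route as the paper's: the trivial containment of the two summand ideals, then splitting a relation $X^\alpha Y^\beta - X^\gamma Y^\delta \in I_{\tilde S}$ into a singleton part and a $\tilde A$-part via the normalization $\min \tilde A_i = 0$, and concluding with the identity $X^\alpha Y^\beta - X^\gamma Y^\delta = Y^\beta(X^\alpha - X^\gamma) + X^\gamma(Y^\beta - Y^\delta)$. Your version merely spells out more explicitly (via additivity of minima and injectivity of translation) the step the paper compresses into ``Since $\min \tilde A_i = 0$, we then have\ldots''.
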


\begin{proof}
Trivially, $I_{\langle a_1, \ldots ,a_s \rangle}, I_{\langle \tilde A_1, \ldots ,\tilde A_t \rangle}\subset I_{\tilde S}$.

Let $X^\alpha Y^\beta - X^\gamma Y^\delta\in I_{\tilde S}$, then $\sum_{i=1}^s \alpha_i \otimes \{a_i\} + \sum_{i=1}^t\beta_i \otimes \tilde A_i=\sum_{i=1}^s \gamma_i\otimes \{a_i\} + \sum_{i=1}^t\delta_i \otimes \tilde A_i$. Since $\min \tilde A_i=0$, we then have $\sum_{i=1}^s \alpha_i \otimes\{a_i\}=\sum_{i=1}^s \gamma_i\otimes \{a_i\}$, and $\sum_{i=1}^t\beta_i \otimes \tilde A_i = \sum_{i=1}^t\delta_i \otimes \tilde A_i$. That is to say, $X^\alpha-X^\gamma\in I_{\langle a_1, \ldots ,a_s \rangle}$, and $Y^\beta -Y^\delta\in I_{\langle \tilde A_1, \ldots ,\tilde A_t \rangle}$.
Note that $X^\alpha Y^\beta - X^\gamma Y^\delta= Y^\beta(X^\alpha - X^\gamma) + X^\gamma(Y^\beta - Y^\delta )\in I_{\langle a_1, \ldots ,a_s \rangle} + I_{\langle \tilde A_1, \ldots ,\tilde A_t \rangle} \subset \k [x_1,\ldots x_s,y_1\ldots ,y_t]$.
\end{proof}

Since the semigroup $\langle a_1, \ldots ,a_s \rangle$ is isomorphic to a numerical semigroup, there exist algorithms for computing $I_{\langle a_1, \ldots ,a_s \rangle}$. Thus, to compute a presentation of $\tilde S$ we need an algorithm to calculate $I_{\langle \tilde A_1, \ldots , \tilde A_t \rangle}$. In the next sections, we provide some algorithms for computing the ideals of some families of sumset semigroups.

\section{Ideals of a fundamental family of sumset semigroups}\label{ideals}

In this section, we give explicitly the ideals associated with the sumset semigroups generated by the elements $\{0,k\a\}$ and $\{0,k\b\}$, where $\a<\b$ are two positive co-prime integers, and $k\in \N\setminus\{0\}$. These semigroups are key to provide an algorithm to compute the semigroup ideals of more types of sumset semigroups.

Fix $\a<\b$ as two positive co-prime integers and $k\in \N\setminus\{0\}$, and consider the semigroup $\bS=\langle k\a,k\b \rangle$ and the sumset semigroup $\tS$ minimally generated by $\{0,k\a\}$ and $\{0,k\b\}$. We prove that $I_{\tS}\subset \k[x,y]$ is a principal ideal providing its generator. Note that $I_{\tS}\subset I_{\bS}=\langle x^\b-y^\a \rangle$.

\begin{lemma}
Set $x^\alpha y^\beta - x^\gamma y^\delta\in I_{\tS}\setminus\{0\}$. Then, $\alpha\neq \gamma$, $\beta\neq \delta$ and $\alpha \cdot \beta \cdot \gamma \cdot \delta \ge 1 $.
\end{lemma}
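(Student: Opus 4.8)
The plan is to translate membership in $I_{\tS}$ into an equality of two explicit finite sets of integers, and then to exploit the arithmetic of $\a$ and $\b$. First I would record that, for any $c,n\in\N$, the sumset satisfies $n\otimes\{0,kc\}=\{0,kc,2kc,\dots,nkc\}$. Consequently, a binomial $x^\alpha y^\beta - x^\gamma y^\delta$ lies in $I_{\tS}$ precisely when
$$\{i k\a + j k\b \mid 0\le i\le\alpha,\ 0\le j\le\beta\}=\{i k\a + j k\b \mid 0\le i\le\gamma,\ 0\le j\le\delta\}.$$
Dividing every element by $k$ and writing $P(p,q)=\{i\a+j\b\mid 0\le i\le p,\ 0\le j\le q\}$, the hypothesis becomes the set equality $P(\alpha,\beta)=P(\gamma,\delta)$. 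The whole argument then takes place with these two explicit subsets of $\N$.

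For the inequalities $\alpha\neq\gamma$ and $\beta\neq\delta$ I would compare the largest elements. Since $\a,\b>0$, one has $\max P(p,q)=p\a+q\b$, so $P(\alpha,\beta)=P(\gamma,\delta)$ forces $(\alpha-\gamma)\a=(\delta-\beta)\b$. Coprimality of $\a$ and $\b$ then yields an integer $m$ with $\alpha-\gamma=m\b$ and $\delta-\beta=m\a$. If $m=0$ the two monomials coincide and the binomial is $0$, which is excluded; hence $m\neq0$, giving at once $\alpha\neq\gamma$ and $\beta\neq\delta$. Since the statement is symmetric under interchanging the pairs $(\alpha,\beta)$ and $(\gamma,\delta)$, I may assume $m>0$, so that $\alpha=\gamma+m\b$ and $\delta=\beta+m\a$; in particular $\alpha\ge\b\ge1$ and $\delta\ge\a\ge1$.

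It remains to prove $\gamma\ge1$ and $\beta\ge1$, which combined with the previous step yields $\alpha\beta\gamma\delta\ge1$. Here I would argue by exhibiting a witness element. If $\gamma=0$, then $P(\gamma,\delta)=\{0,\b,2\b,\dots,\delta\b\}$ consists only of multiples of $\b$; but $\alpha\ge1$ puts $\a\in P(\alpha,\beta)$, and since $0<\a<\b$ the integer $\a$ is not a multiple of $\b$, contradicting $P(\alpha,\beta)=P(\gamma,\delta)$. Symmetrically, if $\beta=0$, then $P(\alpha,0)$ consists only of multiples of $\a$, whereas $\delta\ge1$ puts $\b\in P(\gamma,\delta)$, and $\gcd(\a,\b)=1$ with $\a\ge2$ prevents $\b$ from being a multiple of $\a$; again a contradiction. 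Thus $\gamma\ge1$ and $\beta\ge1$.

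The main obstacle is this last step. After the translation to $P(\alpha,\beta)=P(\gamma,\delta)$ the first two assertions are essentially bookkeeping about maxima, but the real content is that coprimality forbids the smallest nonzero generator of one sumset from being absorbed by the other; this is exactly where $\a\nmid\b$ and $\b\nmid\a$ enter. I would note in particular that the argument for $\beta\ge1$ uses $\a\ge2$, consistent with the standing hypothesis that $\bS=\langle k\a,k\b\rangle$ is a genuine numerical semigroup: in the degenerate situation $\a=1$ the set $\{0,k\}$ produces full intervals, the binomial may have a vanishing exponent, and the conclusion $\alpha\beta\gamma\delta\ge1$ can fail.
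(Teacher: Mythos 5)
Your proof is correct, and at the one point where real care is needed it is more careful than the paper; but note that what you proved is a (necessarily) restricted version of the statement. On the shared part: like the paper, you translate ideal membership into the set equality and compare maxima. You then get both $\alpha\neq\gamma$ and $\beta\neq\delta$ in one stroke from the coprimality factorization $\alpha-\gamma=m\b$, $\delta-\beta=m\a$, whereas the paper runs two separate max-comparisons and only establishes that factorization afterwards (its Lemma~\ref{lemma_A}); and you exclude vanishing exponents by a divisibility witness (one side consists of multiples of a single generator, the other contains a non-multiple), whereas the paper compares minimum nonzero elements. These are minor variants; your route has the small advantage of absorbing Lemma~\ref{lemma_A} en route.

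The substantive point is your last paragraph, and there you are right where the paper is not. The two exclusions are genuinely asymmetric: $\gamma=0$ is impossible whenever $0<\a<\b$, but $\beta=0$ can only be ruled out when $\a\nmid\b$, i.e.\ when $\a\ge 2$. The paper's proof hides this behind ``Similarly, $\beta,\gamma,\delta\ge 1$ can be proved,'' and in fact the lemma is false under the paper's own hypotheses, which allow $\a=1$ (Theorem~\ref{teorema_comienza_cero} is stated for $1\le\a<\b$). Concretely, for $\a=1$, $\b=2$, $k=1$ we have $3\otimes\{0,1\}=\{0,1\}+\{0,2\}=\{0,1,2,3\}$, so $x^3-xy\in I_{\tS}\setminus\{0\}$ with $\beta=0$; this binomial is exactly the generator $x^{\b-1}y^{\a-1}(x^\b-y^\a)$ of Theorem~\ref{teorema_comienza_cero}, so for $\a=1$ the ideal's own generator violates the lemma. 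Your only misstep is attributing the restriction $\a\ge2$ to a ``standing hypothesis'' of the paper: there is none (and $\langle k\a,k\b\rangle$ with $\a=1$, $k=1$ is still a numerical semigroup, namely $\N$), so the honest conclusion is that the lemma must be read with $\a\ge2$, while for $\a=1$ one retains $\alpha\neq\gamma$, $\beta\neq\delta$ and, when $\alpha>\gamma$, also $\alpha,\gamma,\delta\ge1$, with $\beta$ possibly zero. This flaw does not propagate: Lemma~\ref{lemma_B} only claims $\beta\ge\a-1$, which is vacuous for $\a=1$, and Theorem~\ref{teorema_comienza_cero} remains true in that case.
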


\begin{proof}
Note that, since $f=x^\alpha y^\beta - x^\gamma y^\delta\in I_{\tS}$, $\alpha\otimes\{0,k\a\}+\beta \otimes \{0,k\b\}= \gamma\otimes\{0,k\a\}+\delta\otimes \{0,k\b\}$.

Suppose $\alpha=\gamma$, then we have $\alpha k\a +\beta k\b =\max \big\{ \alpha\otimes\{0,k\a\}+\beta \otimes \{0,k\b\} \big\}\ =\max \big\{ \alpha\otimes\{0,k\a\}+\delta\otimes \{0,k\b\} \big\}= \alpha k\a + \delta k\b$, and $\beta = \delta$. Since $f\neq 0$, this is not possible, and therefore $\alpha\neq \gamma$. Analogously, it can be proved that $\beta\neq \delta$.

Suppose $\alpha=0$, then we have $k\b=\min \big\{ \beta\otimes\{0,k\b\}\setminus \{0\} \big\} = \min \big\{ (\gamma\otimes\{0,k\a\}+\delta\otimes \{0,k\b\}) \setminus\{0\} \big\}$. If $\gamma$ is non-zero, then $k\b=\min \big\{ (\gamma\otimes\{0,k\a\}+\delta\otimes \{0,k\b\}) \setminus\{0\} \big\}=k\a$. Therefore, the integers $\gamma$, $\beta$, and $\delta$ are zero and $f=0$. Similarly, $\beta, \gamma, \delta \ge 1 $ can be proved.
\end{proof}

In the sequel, we assume $\alpha>\gamma\ge 1$, and $x^\alpha y^\beta - x^\gamma y^\delta\in I_{\tS}\setminus\{0\}$. Since $I_{\tS}\subset \langle x^\b-y^\a \rangle$, $\alpha \ge \b$ and $\delta \ge \a$.

\begin{lemma}\label{lemma_A}
If $\alpha>\gamma$, then $\delta > \beta$. Additionally, there exists a positive integer $n$ such that $\alpha= n\b+\gamma$ and $\delta =n\a +\beta$.
\end{lemma}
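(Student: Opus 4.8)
```latex
The plan is to exploit the containment $I_{\tS}\subset\langle x^\b-y^\a\rangle$ together with the defining sumset equation to pin down the exponents. The key point is that a binomial $x^\alpha y^\beta-x^\gamma y^\delta$ lying in $I_{\tS}$ must in particular lie in $I_{\bS}=\langle x^\b-y^\a\rangle$, and the kernel congruence of the numerical semigroup $\bS=\langle k\a,k\b\rangle$ is completely understood. First I would observe that membership in $\langle x^\b-y^\a\rangle$ forces the relation $(\alpha-\gamma)\b=(\delta-\beta)\a$ on the exponent vectors (this is the lattice relation of the numerical semigroup $\langle \a,\b\rangle$, the $k$ cancelling out). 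Since we are assuming $\alpha>\gamma$, the left-hand side $(\alpha-\gamma)\b$ is strictly positive, hence $(\delta-\beta)\a>0$, which immediately yields $\delta>\beta$. This disposes of the first assertion.

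Next I would extract the integer $n$. From $(\alpha-\gamma)\b=(\delta-\beta)\a$ and the coprimality $\gcd(\a,\b)=1$, it follows that $\a\mid(\alpha-\gamma)$ and $\b\mid(\delta-\beta)$; writing $\alpha-\gamma=m\a$ for some positive integer $m$ gives $(\delta-\beta)\a=m\a\b$, so $\delta-\beta=m\b$. This is the symmetric form $\alpha-\gamma=m\a$, $\delta-\beta=m\b$. However, the statement asserts the \emph{opposite} pairing, $\alpha-\gamma=n\b$ and $\delta-\beta=n\a$, so I would need to reconcile the two. The resolution is that the roles of $\a$ and $\b$ relative to the variables $x,y$ are fixed by how the generators $\{0,k\a\}\leftrightarrow x$ and $\{0,k\b\}\leftrightarrow y$ sit inside $I_{\bS}=\langle x^\b-y^\a\rangle$: here the variable $x$ (attached to $k\a$) carries exponent $\b$ and $y$ (attached to $k\b$) carries exponent $\a$, precisely because $\a\cdot k\b=\b\cdot k\a$. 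Tracing this convention carefully shows $\a\mid(\alpha-\gamma)$ is in fact the wrong divisibility and instead $\b\mid(\alpha-\gamma)$, giving $\alpha-\gamma=n\b$ and then $\delta-\beta=n\a$ with the same $n$.

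Concretely, I would argue as follows. The congruence defining $\bS$ identifies $x^{\alpha}y^{\beta}$ and $x^{\gamma}y^{\delta}$ iff $\alpha(k\a)+\beta(k\b)=\gamma(k\a)+\delta(k\b)$, i.e.\ $(\alpha-\gamma)\a=(\delta-\beta)\b$ after dividing by $k$. Coprimality then forces $\b\mid(\alpha-\gamma)$; setting $\alpha-\gamma=n\b$ with $n\ge 1$ (positivity from $\alpha>\gamma$) and substituting gives $n\b\a=(\delta-\beta)\b$, hence $\delta-\beta=n\a$. Rearranging yields exactly $\alpha=n\b+\gamma$ and $\delta=n\a+\beta$, and positivity of $n$ together with $\a<\b$ and $\delta-\beta=n\a>0$ re-confirms $\delta>\beta$.

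The main obstacle I anticipate is purely bookkeeping: getting the pairing of $\a$ versus $\b$ with the variables $x$ versus $y$ correct, since a single transposition flips the stated conclusion. The underlying additive number theory is elementary (it is just the structure of the $2$-generated numerical semigroup $\langle\a,\b\rangle$), so once the divisibility is set up correctly the rest is a one-line substitution. I would therefore devote care to the very first step — writing out the mass-balance equation $(\alpha-\gamma)\a=(\delta-\beta)\b$ with the correct generators — and treat everything after it as routine.
```
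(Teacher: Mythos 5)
Your proposal is correct and takes essentially the same approach as the paper: both reduce the lemma to the mass-balance equation $(\alpha-\gamma)\a=(\delta-\beta)\b$ — the paper by taking maxima of the two equal sumsets (the maximum of a sumset is the sum of the maxima), you by passing through the containment $I_{\tS}\subset I_{\bS}$ and the standard description of binomials in the toric ideal of $\langle k\a,k\b\rangle$, which amounts to the same computation — and then conclude with the identical coprimality argument giving $\alpha-\gamma=n\b$ and $\delta-\beta=n\a$. The only blemish is that your first paragraph asserts the transposed relation $(\alpha-\gamma)\b=(\delta-\beta)\a$, but you flag and repair this yourself, and the concrete derivation in your third paragraph is the correct and complete one.
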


\begin{proof}
Since $x^\alpha y^\beta - x^\gamma y^\delta\in I_{\tS}$, $\alpha\otimes\{0,k\a\}+\beta \otimes\{0,k\b\}= \gamma\otimes\{0,k\a\}+\delta \otimes\{0,k\b\}$, and $\alpha k\a +\beta k\b =\max \big\{ \alpha\otimes\{0,k\a\}+\beta\otimes \{0,k\b\} \big\}\ =\max \big\{ \gamma\otimes\{0,k\a\}+\delta\otimes \{0,k\b\} \big\}= \gamma k\a + \delta k\b$. So, $(\delta-\beta)k\b=(\alpha-\gamma)k\a>0$. Furthermore, $(\alpha-\gamma)/(\delta-\beta) = \b/\a$. Since $\gcd(\a,\b)= 1$, there exist two positive integers $n$ and $m$ such that $\alpha=n\b+\gamma$ and $\delta=m\a + \beta$. From the equality $\alpha k\a +\beta k\b=\gamma k\a + \delta k\b$, 
we deduce that $n=m$.
\end{proof}

\begin{lemma}\label{lemma_B}
Let $x^\alpha y^\beta - x^\gamma y^\delta\in I_{\tS}\setminus\{0\}$. Then, $\gamma\ge \b-1$, $\beta \ge \a-1$, and there is a positive integer $n\in \N$ such that $x^\alpha y^\beta - x^\gamma y^\delta= x^\gamma y^\beta(x^{n\b}-y^{n\a})$.
\end{lemma}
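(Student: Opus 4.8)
My plan is to dispose of the factorization at once and then concentrate all the effort on the two inequalities. For the factorization, Lemma~\ref{lemma_A} already supplies, under the standing assumption $\alpha>\gamma$, a positive integer $n$ with $\alpha=n\b+\gamma$ and $\delta=n\a+\beta$ (and it also gives $\delta>\beta$). Substituting these into $x^\alpha y^\beta-x^\gamma y^\delta$ lets me read off
$$x^\alpha y^\beta-x^\gamma y^\delta=x^{\gamma+n\b}y^\beta-x^\gamma y^{\beta+n\a}=x^\gamma y^\beta\bigl(x^{n\b}-y^{n\a}\bigr),$$
so nothing remains there. The genuine content is the pair of bounds $\gamma\ge\b-1$ and $\beta\ge\a-1$, and to attack these I would first make the sumset equality behind $x^\alpha y^\beta-x^\gamma y^\delta\in I_{\tS}$ completely explicit. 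Using that $\ell\otimes\{0,km\}=\{0,km,2km,\dots,\ell km\}$ for any positive integer $m$, the membership becomes the set equality
$$\{\,i\a+j\b \mid 0\le i\le\alpha,\ 0\le j\le\beta\,\}=\{\,i\a+j\b \mid 0\le i\le\gamma,\ 0\le j\le\delta\,\},$$
after cancelling the common factor $k$ (harmless, since multiplication by $k\ge1$ is injective). I would call these sets $L$ and $R$ respectively.

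To prove $\gamma\ge\b-1$ I would test the single element $(\gamma+1)\a$. It lies in $L$ because $\gamma+1\le\alpha$ (indeed $\alpha=n\b+\gamma\ge\b+\gamma>\gamma$), realised by $i=\gamma+1,\ j=0$; hence, by $L=R$, it must also lie in $R$. Now coprimality of $\a$ and $\b$ forces every representation $(\gamma+1)\a=i\a+j\b$ to satisfy $j\equiv0\pmod\a$: writing $j\b=(\gamma+1-i)\a$ shows $\a\mid j$. The choice $j=0$ needs $i=\gamma+1>\gamma$, which is excluded in $R$, and the next admissible choice $j=\a$ needs $i=\gamma+1-\b\ge0$, that is, $\gamma\ge\b-1$. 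So if $\gamma<\b-1$ the element $(\gamma+1)\a$ would sit in $L\setminus R$, contradicting $L=R$.

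The inequality $\beta\ge\a-1$ I would prove by the mirror-image argument, testing $(\beta+1)\b$ instead: it lies in $R$ via $i=0,\ j=\beta+1\le\delta$ (using $\delta=n\a+\beta$), hence in $L$; here coprimality forces $i\equiv0\pmod\b$, the option $i=0$ is blocked because it needs $j=\beta+1>\beta$, and the option $i=\b$ requires $j=\beta+1-\a\ge0$, i.e. $\beta\ge\a-1$. I expect the only genuinely delicate point to be the choice of these two boundary witnesses together with the congruence bookkeeping that pins down their representations; once $L$ and $R$ are written out, the rest is routine. It would be worth double-checking the edge behaviour, namely that $\alpha\ge\b$ and $\delta\ge\a$ (both recorded just before the lemma) ensure each witness falls inside the intended index ranges.
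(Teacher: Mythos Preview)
Your proof is correct and follows essentially the same route as the paper: both arguments pick a boundary element of one sumset, use $\gcd(\a,\b)=1$ to pin down its possible representations on the other side, and extract the desired inequality. The only cosmetic differences are that the paper uses the witness $(\gamma+1)\a+\beta\b$ (rather than your $(\gamma+1)\a$) and writes the right-hand representation as $(\gamma-i)\a+(\delta-j)\b$, and that it records the factorization via Lemma~\ref{lemma_A} at the end instead of the beginning.
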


\begin{proof}
Assume $\gamma<\b-1$. Take $(\gamma+1)k\a+\beta k\b$ in $\alpha\otimes\{0,k\a\}+\beta \otimes\{0,k\b\}$ (recall $\alpha>\gamma$). For that element, there exist two integers $i\in [0,\gamma]$ and $j\in [0,\delta]$ such that $(\gamma+1)k\a+\beta k\b= (\gamma-i)k\a+(\delta-j)k\b\in \gamma\{0,k\a\}+\delta \{0,k\b\}$, hence $(1+i)k\a=(\delta-\beta-j)k\b$. Since $\gcd(\a,\b)= 1$, $i+1\ge \b$ and $i\ge \b -1>\gamma$, but $i\le \gamma$, which is a contradiction. Analogously, the fact $\beta \ge \a-1$ can be proved.

By Lemma~\ref{lemma_A}, there exists $n\in \N\setminus \{0\}$ such that
$$x^\alpha y^\beta - x^\gamma y^\delta= x^{n\b+\gamma} y^\beta - x^\gamma y^{n\a +\beta}= x^\gamma y^\beta(x^{n\b}-y^{n\a}).$$
\end{proof}

\begin{theorem}\label{teorema_comienza_cero}
Let $1\le \a< \b$ be two co-prime integers, $k\in\N\setminus\{0\}$, and $\tS$ be the sumset semigroup $\langle\{0,k\a\},\{0,k\b\}\rangle$. The ideal $I_{\tS}\subset \k[x,y]$ is principal and is generated by $x^{\b-1} y^{\a-1}(x^\b-y^\a)$.
\end{theorem}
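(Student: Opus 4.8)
The plan is to establish the two inclusions $\langle g\rangle\subseteq I_{\tS}$ and $I_{\tS}\subseteq\langle g\rangle$, where $g=x^{\b-1}y^{\a-1}(x^\b-y^\a)$, and thereby conclude that $I_{\tS}=\langle g\rangle$ is principal.

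First I would check that $g\in I_{\tS}$. Expanding, $g=x^{2\b-1}y^{\a-1}-x^{\b-1}y^{2\a-1}$, so this containment is equivalent to the sumset identity
$$(2\b-1)\otimes\{0,k\a\}+(\a-1)\otimes\{0,k\b\}=(\b-1)\otimes\{0,k\a\}+(2\a-1)\otimes\{0,k\b\}.$$
Writing $m\otimes\{0,c\}=\{0,c,\dots,mc\}$ and cancelling the common factor $k$, this reduces to showing that the two finite sets $\{i\a+j\b\mid 0\le i\le 2\b-1,\ 0\le j\le\a-1\}$ and $\{i\a+j\b\mid 0\le i\le\b-1,\ 0\le j\le 2\a-1\}$ coincide. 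The inclusion of the first set into the second follows by leaving each element with $i\le\b-1$ untouched and, for $\b\le i\le 2\b-1$, applying the rewriting $i\a+j\b=(i-\b)\a+(j+\a)\b$ (which uses only $\b\a=\a\b$); the reverse inclusion is handled symmetrically by rewriting $j\mapsto j-\a$, $i\mapsto i+\b$ when $j\ge\a$. This gives $g\in I_{\tS}$, hence $\langle g\rangle\subseteq I_{\tS}$.

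Next I would prove $I_{\tS}\subseteq\langle g\rangle$. Since $I_{\tS}$ is a semigroup ideal it is generated by its binomials, so it suffices to show that every nonzero binomial of $I_{\tS}$ lies in $\langle g\rangle$. After possibly replacing a binomial by its negative (which does not affect membership in $\langle g\rangle$), the earlier lemmas of this section let me assume $\alpha>\gamma$, and then Lemma~\ref{lemma_B} gives the shape $x^\gamma y^\beta(x^{n\b}-y^{n\a})$ with $\gamma\ge\b-1$, $\beta\ge\a-1$, and $n\ge 1$. I would then use the factorization $x^{n\b}-y^{n\a}=(x^\b-y^\a)\sum_{l=0}^{n-1}x^{(n-1-l)\b}y^{l\a}$ to obtain
$$x^\gamma y^\beta(x^{n\b}-y^{n\a})=x^{\gamma-(\b-1)}\,y^{\beta-(\a-1)}\Big(\sum_{l=0}^{n-1}x^{(n-1-l)\b}y^{l\a}\Big)\,g,$$
which is a genuine polynomial multiple of $g$ precisely because $\gamma-(\b-1)\ge 0$ and $\beta-(\a-1)\ge 0$. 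Thus every binomial of $I_{\tS}$ belongs to $\langle g\rangle$, giving $I_{\tS}\subseteq\langle g\rangle$, and combined with the first step the equality $I_{\tS}=\langle g\rangle$ follows.

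I expect the main obstacle to be the first step, verifying $g\in I_{\tS}$: the inclusion $I_{\tS}\subseteq\langle g\rangle$ is essentially a formal consequence of Lemma~\ref{lemma_B} together with the cyclotomic-type factorization of $x^{n\b}-y^{n\a}$, whereas proving $g\in I_{\tS}$ requires the explicit combinatorial comparison of the two sumsets above. The only additional care needed is to confirm that the exponent bounds $\gamma\ge\b-1$ and $\beta\ge\a-1$ furnished by Lemma~\ref{lemma_B} are exactly what makes the factor $x^{\gamma-(\b-1)}y^{\beta-(\a-1)}$ a polynomial, and to track the sign convention coming from the choice $\alpha>\gamma$ versus $\gamma>\alpha$, which alters $g$ only by a factor of $-1$.
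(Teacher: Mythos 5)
Your proof is correct, and its overall skeleton is the same as the paper's: first establish $g=x^{\b-1}y^{\a-1}(x^\b-y^\a)\in I_{\tS}$, then use Lemma~\ref{lemma_B} together with the factorization $x^{n\b}-y^{n\a}=(x^\b-y^\a)\sum_{l=0}^{n-1}x^{(n-1-l)\b}y^{l\a}$ and the bounds $\gamma\ge\b-1$, $\beta\ge\a-1$ to write every binomial of $I_{\tS}$ as a polynomial multiple of $g$. The one place where you genuinely diverge is the verification that $g\in I_{\tS}$, which both you and the paper identify as the substantive step. The paper proves the sumset identity $(2\b-1)\otimes\{0,k\a\}+(\a-1)\otimes\{0,k\b\}=(\b-1)\otimes\{0,k\a\}+(2\a-1)\otimes\{0,k\b\}$ by explicitly decomposing both sides into a common union of six sets $C_1,\dots,C_6$, a fairly laborious enumeration. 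You instead reduce it to the equality of the index sets $\{i\a+j\b\mid 0\le i\le 2\b-1,\ 0\le j\le\a-1\}$ and $\{i\a+j\b\mid 0\le i\le\b-1,\ 0\le j\le 2\a-1\}$ and prove both inclusions by the single rewriting $i\a+j\b=(i\mp\b)\a+(j\pm\a)\b$, checking that the shifted indices stay in the required ranges; this is shorter, symmetric in the two directions, and makes transparent that this step uses no coprimality (that hypothesis enters only through Lemmas~\ref{lemma_A} and~\ref{lemma_B}, which you invoke as black boxes exactly as the paper does). The paper's explicit decomposition buys a concrete description of the common sumset, but nothing in the remainder of the proof uses it, so your streamlined argument loses nothing. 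Your attention to the sign convention ($\alpha>\gamma$ versus $\gamma>\alpha$) and to why $x^{\gamma-(\b-1)}y^{\beta-(\a-1)}$ is a genuine monomial matches what the paper leaves implicit.
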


\begin{proof}
Observe that $x^{\b-1} y^{\a-1}(x^\b-y^\a)= x^{2\b-1} y^{\a-1}-x^{\b-1} y^{2\a-1}$. To prove this theorem, we describe explicitly the sets $A=(2\b-1)\otimes\{0,k\a\}+(\a-1)\otimes\{0,k\b\}$ and $B=(\b-1)\otimes\{0,k\a\}+(2\a-1)\otimes\{0,k\b\}$ associated with the monomials $x^{2\b-1} y^{\a-1}$ and $x^{\b-1} y^{2\a-1}$ (respectively), to achieve $A=B$.

Note that the first set $A=(2\b-1)\otimes\{0,k\a\}+(\a-1)\otimes\{0,k\b\}$ is equal to
\begin{equation*}
\begin{split}
%\begin{multline*} 
\{0,k\a,\ldots ,(2\b-1)k\a\}+\{0,k\b,\ldots ,(\a-1)k\b\}=\\
= k\Big(\{0,\a,\ldots ,(2\b-1)\a\}\cup \{0,\b,\ldots ,(\a-1)\b\}\cup \\
\{\a+\b,\ldots ,\a+(\a-1)\b\}\cup \{2\a+\b,\ldots ,2\a+(\a-1)\b\}\cup
\cdots \\
\cup \{(2\b-1)\a+\b,\ldots ,(2\b-1)\a+(\a-1)\b\}\Big)\\
%%%%%
= k\Big(\{0,\a,\ldots ,(\b-1)\a,\b\a,(\b+1)\a ,\ldots ,(2\b-1)\a\}\cup \\
\{0,\b,\ldots ,(\a-1)\b\}\cup \{\a+\b,\ldots ,\a+(\a-1)\b\}\cup
\cdots \\
\cup \{(\b-1)\a+\b,\ldots ,(\b-1)\a+(\a-1)\b\}\cup \\
\{\b\a+\b,\ldots ,\b\a+(\a-1)\b\}\cup \\
\{(\b+1)\a+\b,\ldots ,(\b+1)\a+(\a-1)\b\}\cup 
\cdots \\
\cup \{(2\b-1)\a+\b,\ldots ,(2\b-1)\a+(\a-1)\b\}\Big).\\
%\end{multline*}
\end{split}
\end{equation*}
We denote $C_1=\{0,\a,\ldots ,(\b-1)\a\}$, $C_2=\{0,\b,\ldots ,(\a-1)\b\}$,
$C_3=\cup_{i=1}^{\b-1}\{i\a+\b,\ldots ,i\a+(\a-1)\b\}$, $C_4=\{\a\b\}\cup \{\b\a+\b,\ldots ,\b\a+(\a-1)\b\}$, $C_5=\{(\b+1)\a ,\ldots ,(2\b-1)\a\}$, and $C_6=\cup_{i=\b+1}^{2\b-1} \{i\a+\b,\ldots ,i\a+(\a-1)\b\}$. The set $A$ is the union $\cup _{i=1}^6 kC_i$.

The set $B=(\b-1)\otimes\{0,k\a\}+(2\a-1)\otimes\{0,k\b\}$ is
\begin{multline*} 
% \begin{equation*}
% \begin{split} 
\{0,k\a,\ldots ,(\b-1)k\a\}+\{0,k\b,\ldots ,(2\a-1)k\b\}=\\
= k\Big(\{0,\a,\ldots ,(\b-1)\a\}\cup
\{0,\b,\ldots ,(\a-1)\b,\a\b,\ldots ,(2\a-1)\b\}\cup \\
\{\a+\b,\ldots ,\a+(\a-1)\b,\a+\a\b,\ldots,\a+(2\a-1)\b\}\cup \cdots \\
\cup \{(\b-1)\a+\b,\ldots , (\b-1)\a+(\a-1)\b,(\b-1)\a+\a\b ,\ldots, (\b-1)\a+(2\a-1)\b\}\Big)\\
= \cup _{i=1}^6 kC_i.
% \end{split}
% \end{equation*}
\end{multline*}
Thus, $A=B$, and $x^{\b-1} y^{\a-1}(x^\b-y^\a)\in I_S$.

To finish the proof, we use Lemma~\ref{lemma_B}. If $n=1$, then $x^\alpha y^\beta - x^\gamma y^\delta= x^{\gamma-\b+1} y^{\beta-\a+1} x^{\b-1} y^{\a-1}(x^\b-y^\a)$.
In case $n>1$, by factorizing the binomial $x^{n\b}-y^{n\a}$, we obtain
\begin{multline*}
x^\alpha y^\beta - x^\gamma y^\delta = \\ x^{\gamma-\b+1} y^{\beta-\a+1}(x^{(n-1)\b}+x^{(n-2)\b}y+\cdots + xy^{(n-2)\a}+ y^{(n-1)\a}) { x^{\b-1} y^{\a-1}(x^\b-y^\a)}.
\end{multline*}
In any case, $I_{\tS}=\big\langle x^{\b-1} y^{\a-1}(x^\b-y^\a)\big\rangle$.
\end{proof}

\begin{corollary}\label{corollary_A}
Let $a_1,\ldots ,a_s$ be positive integers, and $S$ be the sumset semigroup generated by $\big\{\{a_1\},\dots,\{a_s\},\{0,k\a\},\{0,k\b\}\big\}$. Then,
$$I_{S}=I_{\langle a_1, \ldots ,a_s \rangle}+ \big\langle x^{\b-1} y^{\a-1}(x^\b-y^\a) \big\rangle\subset \k [x_1,\ldots ,x_s,x,y].$$
\end{corollary}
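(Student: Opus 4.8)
The plan is to derive this statement as an immediate consequence of the two results already in hand, Proposition~\ref{numerico_y_comienza_cero} and Theorem~\ref{teorema_comienza_cero}, with essentially no further computation needed. The key observation is that the generating set of $S$ has exactly the shape required by Proposition~\ref{numerico_y_comienza_cero}: it consists of the singletons $\{a_1\},\dots,\{a_s\}$ together with the two sets $\{0,k\a\}$ and $\{0,k\b\}$, whose minima are $0$.

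First I would verify the hypotheses of Proposition~\ref{numerico_y_comienza_cero}. Each $a_i$ is a positive integer, so $a_i\neq 0$; and setting $\tilde A_1=\{0,k\a\}$ and $\tilde A_2=\{0,k\b\}$ we have $\min\tilde A_1=\min\tilde A_2=0$. Applying the proposition (with $t=2$, and with the variables $y_1,y_2$ appearing there relabelled as $x,y$) yields
\[
I_S = I_{\langle a_1,\ldots ,a_s\rangle} + I_{\tS}\subset \k[x_1,\ldots ,x_s,x,y],
\]
where $\tS=\langle\{0,k\a\},\{0,k\b\}\rangle$ is precisely the sumset semigroup studied in the previous section.

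It then remains only to substitute the explicit description of $I_{\tS}$. By Theorem~\ref{teorema_comienza_cero}, this ideal is principal and equals $\big\langle x^{\b-1} y^{\a-1}(x^\b-y^\a)\big\rangle$; inserting this into the display gives the claimed identity. I do not expect a genuine obstacle, since the corollary is a direct specialization of the earlier results; the only point requiring a line of care is the bookkeeping of indeterminates, namely checking that the two summands live on disjoint variable blocks inside the common ring $\k[x_1,\ldots ,x_s,x,y]$, consistently with the assignment of $x,y$ to $\{0,k\a\}$ and $\{0,k\b\}$ in Theorem~\ref{teorema_comienza_cero} and of $x_1,\ldots ,x_s$ to the singletons. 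Once this identification is made, no hypothesis of either earlier result is left to verify.
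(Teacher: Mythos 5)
Your proposal is correct and follows exactly the paper's own proof: the authors likewise obtain the corollary by combining Proposition~\ref{numerico_y_comienza_cero} (applied with $\tilde A_1=\{0,k\a\}$, $\tilde A_2=\{0,k\b\}$) with Theorem~\ref{teorema_comienza_cero}. Your extra care in checking the hypotheses and the variable bookkeeping is sound, and nothing further is needed.
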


\begin{proof}
From Proposition~\ref{numerico_y_comienza_cero} and Theorem~\ref{teorema_comienza_cero}, we obtain the result.
\end{proof}

\section{Computing the ideals of sumset semigroups}\label{computing}

The aim of this section is to determine an algorithm for computing the ideals associated with some families of sumset semigroups. As in the previous section, we consider two positive co-prime integers $\a<\b$, $k\in \N\setminus\{0\}$, the semigroup $\bS=\langle k\a,k\b \rangle$, and the sumset semigroup $\tS=\langle \{0,k\a\},\{0,k\b\}\rangle$.

For any two non-negative integers $n$ and $m$, $A_{nm}$ denotes $\big\{\alpha k\a+\beta k\b \mid \alpha \in \{0,\ldots ,n\},\, \beta\in \{0,\ldots ,m\} \big\}=n\otimes k\{0,\a\}+ m\otimes k\{0,\b\}$.

\begin{theorem}\label{main_theorem}
Let $\{(n_i,m_i)\mid n_i,m_i\in \N,\,n_i+m_i>0,\, i=1,\ldots ,t\}$ be a non-empty subset of $\N^2$, $b_1,\ldots ,b_p,a_1,\ldots ,a_s\in \N\setminus \{0\}$ with $s\le t$, and consider
$S$ the sumset semigroup generated by $$\big\{\{b_1\},\ldots ,\{b_p\}, \{a_1\}+A_{n_1m_1},\ldots, \{a_s\}+A_{n_sm_s},A_{n_{s+1}m_{s+1}},\ldots ,A_{n_tm_t}\big\},$$
and the sumset semigroup $S'=\big\langle \{b_1\},\ldots ,\{b_p\}, \{a_1\},\ldots, \{a_s\}, \{0,k\a\},\{0,k\b\} \big\rangle$.
Then, $I_S\in \k [x_1,\ldots ,x_p,z_1,\ldots , z_t]$ is
\begin{multline*}
\Big(I_{S'} + \big\langle z_1-w_1x^{n_1}y^{m_1},\ldots , z_s-w_sx^{n_s}y^{m_s}, z_{s+1}-x^{n_{s+1}}y^{m_{s+1}},\ldots ,z_t-x^{n_t}y^{m_t} \big\rangle\Big)\\ \bigcap \k [x_1,\ldots ,x_p,z_1,\ldots ,z_t],
\end{multline*}
with $I_{S'}\subset \k[x_1,\ldots ,x_p, w_1,\ldots ,w_s ,x,y]$.
\end{theorem}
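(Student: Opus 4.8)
The plan is to realize $I_S$ as the contraction of $I_{S'}+J$ along the monomial substitution that turns each generator of $S$ into its expansion in the generators of $S'$, and then to verify the two inclusions separately, invoking the ``moreover'' clause of Lemma~\ref{para_teorema} for the delicate direction. Write $R=\k[x_1,\dots,x_p,z_1,\dots,z_t]$, $R'=\k[x_1,\dots,x_p,w_1,\dots,w_s,x,y]$, and $\tilde R=\k[x_1,\dots,x_p,w_1,\dots,w_s,x,y,z_1,\dots,z_t]$. Put $M_i=w_ix^{n_i}y^{m_i}$ for $1\le i\le s$ and $M_j=x^{n_j}y^{m_j}$ for $s<j\le t$, so that $J=\langle z_i-M_i\mid 1\le i\le t\rangle$ and the claimed ideal is $(I_{S'}+J)\cap R$, with $I_{S'}$ and $J$ viewed in $\tilde R$. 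The first step is the combinatorial identity underlying everything: the generator of $S$ attached to $z_i$ is $\{a_i\}+A_{n_im_i}=\{a_i\}+n_i\otimes\{0,k\a\}+m_i\otimes\{0,k\b\}$ for $i\le s$, which is exactly the element of $S'$ obtained by evaluating the monomial $M_i$ at the generators of $S'$; for $j>s$ it is $A_{n_jm_j}=n_j\otimes\{0,k\a\}+m_j\otimes\{0,k\b\}$, the evaluation of $M_j$; and the generator attached to $x_\ell$ is $\{b_\ell\}$ in both semigroups. Letting $\pi:R\to\FSN$ and $\varphi:R'\to\FSN$ be the monoid evaluation maps and $\rho:\tilde R\to R'$ the substitution fixing the old variables and sending $z_i\mapsto M_i$, checking agreement on generators gives $\pi(X^\sigma)=\varphi(\rho(X^\sigma))$ for every monomial $X^\sigma\in R$.

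Next I would reduce to pure-difference binomials. Since $I_{S'}$ and $J$ are generated by differences of monomials, so is $I_{S'}+J$, and a Gr\"obner basis of this binomial ideal consists of binomials; applying the Elimination Theorem with a lex order in which $w_1,\dots,w_s,x,y$ precede the remaining variables, the generating set of $(I_{S'}+J)\cap R$ it produces also consists of binomials. Hence it suffices to prove both inclusions on binomials of the form $X^\sigma-X^\tau$. I would also record the elementary telescoping fact that, because $z_i-M_i\in J$, every monomial satisfies $X^\sigma-\rho(X^\sigma)\in J$.

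For the inclusion $I_S\subseteq(I_{S'}+J)\cap R$, take a generating binomial $X^\sigma-X^\tau\in I_S$, so $\pi(\sigma)=\pi(\tau)$ and therefore $\varphi(\rho(X^\sigma))=\varphi(\rho(X^\tau))$, i.e.\ $\rho(X^\sigma)-\rho(X^\tau)\in I_{S'}$. The decomposition $X^\sigma-X^\tau=\bigl(\rho(X^\sigma)-\rho(X^\tau)\bigr)+\bigl(X^\sigma-\rho(X^\sigma)\bigr)-\bigl(X^\tau-\rho(X^\tau)\bigr)$ then exhibits $X^\sigma-X^\tau$ as an element of $I_{S'}+J$ lying in $R$, as required. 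For the reverse inclusion, let $X^\sigma-X^\tau$ be a binomial in $(I_{S'}+J)\cap R$. The same telescoping gives $X^\sigma-X^\tau\equiv\rho(X^\sigma)-\rho(X^\tau)\pmod J$, so $\rho(X^\sigma)-\rho(X^\tau)\in I_{S'}+J$; moreover this binomial involves no $z$-variable, hence lies in $R'$. This is precisely the hypothesis of the second assertion of Lemma~\ref{para_teorema} with $I=I_{S'}$ and new variables $Z=\{z_1,\dots,z_t\}$, which yields $\rho(X^\sigma)-\rho(X^\tau)\in I_{S'}$. Consequently $\varphi(\rho(X^\sigma))=\varphi(\rho(X^\tau))$, that is $\pi(\sigma)=\pi(\tau)$, so $X^\sigma-X^\tau\in I_S$, completing the argument.

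I expect the main obstacle to be conceptual bookkeeping rather than a hard estimate: establishing $\pi=\varphi\circ\rho$ correctly, by confirming that each generator of $S$ genuinely expands in the generators of $S'$ exactly as the substitution $z_i\mapsto M_i$ prescribes; and justifying the reduction to pure-difference binomials, so that the order-independent ``moreover'' clause of Lemma~\ref{para_teorema} may be applied even though the Elimination Theorem is invoked with an order (eliminating $w_1,\dots,w_s,x,y$) different from the one used to prove that clause. The whole scheme works because it converts the non-cancellative relations of $S$ into relations of the already-understood semigroup $S'$, whose ideal is supplied explicitly by Corollary~\ref{corollary_A}.
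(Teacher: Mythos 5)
Your proposal is correct and takes essentially the same approach as the paper: both arguments replace each monomial $X^\sigma$ by its substituted image $\rho(X^\sigma)$ modulo $J$ (the paper does this via explicit telescoping sums), translate the sumset relations of $S$ into relations of $S'$ under $z_i\mapsto M_i$, and invoke the ``moreover'' clause of Lemma~\ref{para_teorema} for the containment of $(I_{S'}+J)\cap\k[x_1,\ldots,x_p,z_1,\ldots,z_t]$ in $I_S$. The only difference is that you explicitly justify the reduction to pure-difference binomials (via binomial Gr\"obner bases and elimination), a step the paper leaves implicit.
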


\begin{proof}
Denote $J$ to the ideal $$\big\langle z_1-w_1x^{n_1}y^{m_1},\ldots , z_s-w_s x^{n_s}y^{m_s}, z_{s+1}-x^{n_{s+1}}y^{m_{s+1}},\ldots ,z_t-x^{n_t}y^{m_t} \big\rangle.$$

Any monomials $Z^\beta$ and $Z^\delta$ can be rewritten as follows. Denote $f_i=\prod_{k=i}^t z_i^{\beta_j}$, $\hat f_i=\prod_{k=i}^t z_i^{\delta_j}$, $g_i^j=\prod_{k=i}^j(w_kx^{n_k}y^{m_k})^{\beta_k}$,
$\hat g_i^j=\prod_{k=i}^j(w_kx^{n_k}y^{m_k})^{\delta_k}$, $h_i^j=\prod_{k=i}^j(x^{n_k}y^{m_k})^{\beta_k}$, $\hat h_i^j=\prod_{k=i}^j(x^{n_k}y^{m_k})^{\delta_k}$, and suppose $\beta_0=\delta_0=0$, then we have that
$$Z^\beta = z_1^{\beta_1}\cdots z_t^{\beta_t} = \sum_{i=0}^{s-1} f_{i+2} g_0^i\big(z_{i+1}^{\beta_{i+1}}-g_{i+1}^{i+1}\big)+
\sum_{i=s+1}^t f_{i+1}g_1^s h_{s+1}^{i-1}\big(z_{i}^{\beta_{i}}-h_{i}^{i}\big)+g_1^sh_{s+1}^t$$
and
$$Z^\delta = z_1^{\delta_1}\cdots z_t^{\delta_t} = \sum_{i=0}^{s-1} \hat f_{i+2} \hat g_0^i\big(z_{i+1}^{\delta_{i+1}}-\hat g_{i+1}^{i+1}\big)+
\sum_{i=s+1}^t \hat f_{i+1}\hat g_1^s \hat h_{s+1}^{i-1}\big(z_{i}^{\delta_{i}}-\hat h_{i}^{i}\big)+\hat g_1^s\hat h_{s+1}^t.$$
Denote $F=X^\alpha g_1^sh_{s+1}^t -X^\gamma \hat g_1^s\hat h_{s+1}^t$.

Since any binomial $u^n-v^n$ is equal to $(u-v)(u^{n-1}+u^{n-2}v+\cdots +uv^{n-2}+v^{n-1})$, the binomials $z_i^n-(w_ix^{n_i}y^{m_i})^n$ and $z_j^n-(x^{n_j}y^{m_j})^n$ belong to $J$ for every non-negative integer $n$ and all $i=1,\ldots ,s$ and $j=s+1, \ldots ,t$.

Let $X^\alpha Z^\beta-X^\gamma Z^\delta$ be a binomial belonging to $I_{S'} + J$, so
$F= X^\alpha Z^\beta-X^\gamma Z^\delta + q\in I_{S'}+J,$ with $q\in J$.
By Lemma~\ref{para_teorema}, we have $F\in I_{S'}$, hence
\begin{multline*}
\sum_{i=1}^p \alpha_i \otimes \{b_i\} + \sum_{i=1}^s \beta_i\otimes \{a_i\} +\sum_{i=1}^t \Big( \beta_i \otimes \big(n_i\otimes \{0,k\a\}\big) + \beta_i\otimes \big(m_i \otimes \{0,k\b\}\big)\Big)=\\
\sum_{i=1}^p \gamma_i \otimes \{b_i\} + \sum_{i=1}^s \delta_i\otimes \{a_i\} +\sum_{i=1}^t \Big( \delta_i \otimes \big(n_i\otimes \{0,k\a\}\big) + \delta_i\otimes \big(m_i \otimes \{0,k\b\}\big)\Big).
\end{multline*}
Therefore,
\begin{multline*}
\sum_{i=1}^p \alpha_i \otimes \{b_i\} + \sum_{i=1}^s \beta_i\otimes \big(\{a_i\} + A_{n_im_i}\big) +
\sum_{i=s+1}^t \beta_i A_{n_im_i}=\\
\sum_{i=1}^p \gamma_i \otimes \{b_i\} + \sum_{i=1}^s \delta_i\otimes \big(\{a_i\} + A_{n_im_i}\big) +
\sum_{i=1}^t \delta_i A_{n_im_i},
\end{multline*}
and $X^\alpha Z^\beta-X^\gamma Z^\delta\in I_{S}$.

Analogously, if $X^\alpha Z^\beta-X^\gamma Z^\delta\in I_S$, then $F\in I_{S'}$, and $X^\alpha Z^\beta-X^\gamma Z^\delta\in I_{S'} + J$. This completes the proof.

\end{proof}

The above proof can also be done by using \cite[Proposition~4]{presentaciones}. In our proof, we employ the language of polynomials, ideals and Gröbner bases, avoiding congruences.

From Theorem~\ref{main_theorem}, we obtain an algorithm (Algorithm~\ref{algoritmo_ideal_familia}) for computing the ideal of the sumset semigroup generated by $\big\{\{b_1\},\ldots ,\{b_p\}, \{a_1\}+A_{n_1m_1},\ldots, \{a_s\}+A_{n_sm_s},A_{n_{s+1}m_{s+1}},\ldots ,A_{n_tm_t}\big\}$.

\begin{algorithm}[H]
\BlankLine
\KwData{$\big\{\{b_1\},\ldots ,\{b_p\}, \{a_1\}+A_{n_1m_1},\ldots, \{a_s\}+A_{n_sm_s},A_{n_{s+1}m_{s+1}},\ldots ,A_{n_tm_t}\big\}$, the generating set of $S$ .}
\KwResult{$\cal G$, a generating set of the semigroup ideal of $S$.}
\BlankLine
\Begin
{ $S'\leftarrow \big\{\{b_1\},\ldots ,\{b_p\}, \{a_1\},\ldots, \{a_s\}, \{0,k\a\},\{0,k\b\}\big\}$\;

$S_1\leftarrow \big\{\{b_1\},\ldots ,\{b_p\}, \{a_1\},\ldots, \{a_s\}\big\}$\;

$G_1\leftarrow$ a generating set of $I_{\langle S_1 \rangle}\subset \k[X, W]$\;

$G' \leftarrow G_1 \sqcup \{x^{\b-1} y^{\a-1}(x^\b-y^\a)\}\subset \k[X, W,x,y]$, it is a generating set of $I_{\langle S' \rangle}$ (Corollary~\ref{corollary_A})\;

$G_2\leftarrow G'\sqcup \{ z_1-w_1x^{n_1}y^{m_1},\ldots , z_s-w_sx^{n_s}y^{m_s}, z_{s+1}-x^{n_{s+1}}y^{m_{s+1}},\ldots ,z_t-x^{n_t}y^{m_t} \} \subset \k [X,W,x,y,Z]$\;

$G_3 \leftarrow$ a Gr\"{o}bner basis of $G_2$ respect to a monomial order with $x,y,w_q>x_i$ and $x,y,w_q>z_j$ for every $i=1,\ldots ,p$, $j=1,\ldots ,t$, and $q=1,\ldots ,s$\;

${\cal G}\leftarrow \{X^\alpha Z^\beta-X^\gamma Z^\delta \mid X^\alpha Z^\beta-X^\gamma Z^\delta \in G_3\}$, it is a generating set (Gr\"{o}bner basis) of $I_{S}$\;

\Return $\cal G$\;
}
\caption{Computation of $I_S$.}\label{algoritmo_ideal_familia}
\end{algorithm}

We show how this algorithm works with an example.

\begin{example}\label{ejemplo_base}
Let $S$ be the sumset semigroup generated by \[\big\{\{3\},\{4\},\{6,12\},\{7,10,13\},\{0,3,6,9\}
\big\}.\]
Then, from the first steps of Algorithm~\ref{algoritmo_ideal_familia},% we have that:
\begin{itemize}
\item $S'=\big\{\{3\},\{4\},\{6\},\{7\},\{0,3\},\{0,6\}\big\}$,
\item $S_1=\big\{\{3\},\{4\},\{6\},\{7\}\big\}$.
\end{itemize}

If we compute a generating set of the ideal of $S_1$, then we get the following one,
$G_1=\big\{w_1^7-w_2^6,w_2^2 x_2-w_1^3,w_1^4 x_2-w_2^4,w_1x_2^2-w_2^2,x_2^3-w_1^2,w_2 x_1-w_1 x_2,w_1^2 x_1-w_2 x_2^2,x_1x_2-w_2,x_1^2-w_1\big\}$.
Therefore, $G'=G_1\cup \big\{x (x^2-y)\big\}$ and $G_2=G'\cup \big\{z_1-w_1 y,z_2-w_2 x^2,z_3-x^3\big\}.$

Now, we compute a Gröbner basis of $G_2$ respect to the lexicographical order where $x>y>w_i>x_j>z_k$ for all $i$, $j$, and $k$, and we obtain
\begin{equation*}
\begin{split}
G_3=\big\{&z_1^{21} z_3-z_2^{18} z_3^3, z_1^{21} z_2-z_2^{19} z_3^2, x_2z_2^{14} z_3^3-z_1^{17} z_3, x_2 z_2^{15} z_3^2-z_1^{17} z_2,\\&
x_2 z_1^4 z_3-z_2^4 z_3, x_2 z_1^4 z_2-z_2^5, x_2^2 z_2^{10} z_3^3-z_1^{13}z_3, x_2^2 z_2^{11} z_3^2-z_1^{13} z_2, \\&
x_2^3 z_2^6 z_3^3-z_1^9z_3, x_2^3 z_2^7 z_3^2-z_1^9 z_2, x_2^4 z_2^2 z_3^3-z_1^5 z_3, x_2^4z_2^3 z_3^2-z_1^5 z_2, \\&
x_2^5 z_3^2-z_1 z_2^2, x_1 z_2 z_3-x_2 z_1z_3, x_1 z_2^2-x_2 z_1 z_2, x_1 z_1^3 z_3-z_2^3 z_3, \\&
x_1 z_1^3z_2-z_2^4, x_1 x_2^4 z_3^2-z_1^2 z_2, x_1^2 z_1^2 z_3-x_2 z_2^2z_3, x_1^2 z_1^2 z_2-x_2 z_2^3, \\&
x_1^2 x_2^3 z_3^3-z_1^3 z_3, x_1^3 z_1z_3-x_2^2 z_2 z_3, x_1^3 z_1 z_2-x_2^2 z_2^2, x_1^3 x_2^3z_3^2-z_2^3,\\&
x_1^4-x_2^3,w_2-x_1 x_2, w_1-x_1^2, y z_2^2-x_1^2 x_2^2z_3^2, y z_1 z_2-x_1^3 x_2 z_3^2,\\&
y z_1^2 z_3-x_2^3 z_3^3, y x_2^2 z_2z_3-x_1 z_1^2 z_3,y x_2^2 z_1 z_3-z_2^2 z_3, y x_2^3-x_1^2 z_1,\\&
y x_1x_2 z_3-z_2 z_3, y x_1 x_2 z_2-z_2^2, y x_1^2-z_1, y^2 z_2-x_1 x_2z_3^2, y^2 z_1 z_3-x_1^2 z_3^3,\\&
y^3 z_3-z_3^3, x z_3^2-y^2 z_3, x z_2-x_1x_2 z_3, x z_1-x_1^2 z_3, x x_2^4 z_3-x_1 z_1 z_2,\\&
x x_1 x_2 z_3-y z_2, x y-z_3, x^2 z_3-y z_3, x^2 x_2^4-x_1^3 z_2, x^2 x_1 x_2-z_2,x^3-z_3\big\}.
\end{split}
\end{equation*}
Finally, the output of the algorithm is 
\begin{equation*}
\begin{split}
\big\{& z_1^{21} z_3-z_2^{18} z_3^3,z_1^{21} z_2-z_2^{19} z_3^2,x_2z_2^{14} z_3^3-z_1^{17} z_3,
x_2 z_2^{15} z_3^2-z_1^{17} z_2,\\&
x_2 z_1^4z_3-z_2^4 z_3,x_2 z_1^4 z_2-z_2^5,
x_2^2 z_2^{10} z_3^3-z_1^{13}z_3,x_2^2 z_2^{11} z_3^2-z_1^{13} z_2,\\&
x_2^3 z_2^6 z_3^3-z_1^9z_3,x_2^3 z_2^7 z_3^2-z_1^9 z_2,
x_2^4 z_2^2 z_3^3-z_1^5 z_3,x_2^4z_2^3 z_3^2-z_1^5 z_2,\\&
x_2^5 z_3^2-z_1 z_2^2,x_1 z_2 z_3-x_2 z_1z_3,
x_1 z_2^2-x_2 z_1 z_2,x_1 z_1^3 z_3-z_2^3 z_3,
x_1 z_1^3z_2-z_2^4,\\&
x_1 x_2^4 z_3^2-z_1^2 z_2,x_1^2 z_1^2 z_3-x_2 z_2^2z_3,
x_1^2 z_1^2 z_2-x_2 z_2^3,x_1^2 x_2^3 z_3^3-z_1^3 z_3,\\&
x_1^3 z_1z_3-x_2^2 z_2 z_3,
x_1^3 z_1 z_2-x_2^2 z_2^2,
x_1^3 x_2^3z_3^2-z_2^3,x_1^4-x_2^3\big\}.
\end{split}
\end{equation*}
Since the binomial $x_1 z_2 z_3-x_2 z_1z_3\in I_S$, $\{3\}+\{7,10,13\}+\{0,3,6,9\}=\{4\}+\{6,12\}+\{0,3,6,9\}$, but
$\{3\}+\{7,10,13\}\neq\{4\}+\{6,12\}$, the semigroup $S$ is non-cancellative.
\end{example}

The following example introduces an algorithm to obtain an expression for an integer set as a sum of other given integer sets, if possible. In particular, the $i$-fold sumset of a set is studied.

\begin{example}
We now use the above presentation of $S$ to check whether the element $i\otimes \{7,10,13\}$ can be expressed in terms of the other generators of the semigroup $S$. We compute the Gr\"obner basis with respect to the order given by the matrix 
$$A=
\left(
\begin{array}{ccccc}
0 & 0 & 0 & 1 & 0 \\
1 & 1 & 1 & 0 & 1 \\
1 & 0 & 0 & 0 & 0 \\
0 & 1 & 0 & 0 & 0 \\
0 & 0 & 1 & 0 & 0 \\
\end{array}
\right),
$$
and we obtain the set 
\begin{multline*}
G_A=\big\{
x_1^4-x_2^3,x_1^2 x_2^3 z_3^3-z_1^3 z_3,x_2^6 z_3^3-x_1^2 z_1^3 z_3,z_1^2 z_2-x_1 x_2^4 z_3^2,\\x_1 z_2 z_3-x_2 z_1 z_3,
x_2^2 z_2 z_3-x_1^3 z_1 z_3,z_1 z_2^2-x_2^5 z_3^2,x_1 z_2^2-x_2 z_1 z_2,\\x_2 z_2^2 z_3-x_1^2 z_1^2 z_3,x_2^2 z_2^2-x_1^3 z_1 z_2,z_2^3-x_1^3 x_2^3 z_3^2
\big\}.
\end{multline*}
In Table~\ref{table1}, we show some elements $z_2^i$ that when reduced with respect to the basis $G_A$ are expressed by using only the variables $x_1$, $x_2$, $z_1$, and $z_3$, and the expression of $i\otimes\{7,10,13\}$ in terms of the elements of the set $\big\{\{3\},\{4\},\{6,12\},\{0,3,6,9\}\big\}$.

\begin{table}[h]
\centering
\begin{tabular}{r|l|l}
$i$ & Reduction of $z_2^i$ & $i\otimes \{7,10,13\}=$\\ \hline
$3$ & $x_1^3 x_2^3 z_3^2$ & $3\otimes\{3\}+3\otimes\{4\}+2\otimes\{0,3,6,9\}$\\
$4$ & $x_1^2 x_2^4 z_1 z_3^2$ & $2\otimes\{3\}+4\otimes\{4\}+1\otimes\{0,6,12\}+2\otimes\{0,3,6,9\}$ \\
$5$ & $x_1 x_2^5 z_1^2 z_3^2$ & $1\otimes\{3\}+5\otimes\{4\}+2\otimes\{0,6,12\}+2\otimes\{0,3,6,9\}$\\
$6$ & $x_2^6 z_1^3 z_3^2$ & $6\otimes\{4\}+3\otimes\{6,12\}+2\otimes\{0,3,6,9\}$\\
$7$ & $x_1^3 x_2^4 z_1^4 z_3^2$ & $3\otimes\{3\}+4\otimes\{4\}+4\otimes\{0,6,12\}+2\otimes\{0,3,6,9\}$\\
$8$ & $x_1^2 x_2^5 z_1^5 z_3^2$ & $2\otimes\{3\}+5\otimes\{4\}+5\otimes\{0,6,12\}+2\otimes\{0,3,6,9\}$\\
\end{tabular}
\caption{Expressions of some elements $i\otimes\{7,10,13\}$ in terms of the subset of generators $\big\{\{3\},\{4\},\{6,12\},\{0,3,6,9\}\big\}$.}
\label{table1}
\end{table}

In general, the reduction of $z_2^i$ with respect to $G_A$ is 
\[
x_1^{(2-i)\mod 4}\,
x_2^{\lfloor \frac{i+1}4 \rfloor+2+\big((i-3)\mod 4\big) }
z_1^{i-3}
z_3^{2}.
\] 
Therefore, for every $i\geq 3$,
\begin{multline*}
i\otimes \{7,10,13\}=
\big({(2-i)\mod 4}\big)\otimes\{3\}+\\
\Big({\Big\lfloor \frac{i+1}4 \Big\rfloor+2+\big((i-3)\mod 4\big) }\Big)\otimes\{4\}+
(i-3)\otimes\{6,12\}+
2\otimes\{0,3,6,9\}.
\end{multline*}

\end{example}

The last examples are dedicated to study the elasticity of a sumset semigroup.

\begin{example}\label{ex14}
Again, consider the semigroup $S$ given in example \ref{ejemplo_base}. From its ideal, we compute a generating set of its associated lattice $M$,
\begin{multline*}\big\{
 \{0,0,21,-18,-2\}, \{0,1,-17,14,2\}, \{0,1,4,-4,0\}, \{0,2,-13,10,2\},\\
 \{0,3,-9,6,2\}, \{0,4,-5,2,2\}, \{0,5,-1,-2,2\}, \{1,-1,-1,1,0\},\\ 
 \{1,0,3,-3,0\}, \{1,4,-2,-1,2\}, \{2,-1,2,-2,0\}, \{2,3,-3,0,2\},\\ 
 \{3,-2,1,-1,0\}, \{3,3,0,-3,2\}, \{4,-3,0,0,0\} 
\big\},\end{multline*}
and its system of linear homogeneous equations,
$$Ax=\left(
\begin{array}{ccccc}
 -3 & -4 & 2 & 1 & 12 \\
 -6 & -8 & 2 & 0 & 21 \\
\end{array}
\right)x=0.$$
We already know that $S$ is strongly reduced, but this fact is far for being clear from the above equations. We can check it by computing with Normaliz \cite{normaliz} its Hilbert basis:
%\begin{lstlisting}[language=Python]
\begin{verbatim}
    >>> c1=Cone(equations=[[-3,-4,2,1,12],[-6,-8,2,0,21]])
    >>> c1.HilbertBasis()
    []
\end{verbatim}
%\end{lstlisting}
Since the above output is the empty list, we have $M\cap \N^5=\{0\}$.
The Hilbert basis of $(A\mid -A)(x,y)=0$ has $109$ elements: %(el resultado está en \verb|ejemplo_elasticidad_sistema.hil|),
{\tiny 
\begin{equation*}
\begin{split}
    HB= \big\{&
    \{0,1,4,0,0,0,0,0,4,0\},\{1,0,3,0,0,0,0,0,3,0\},\{0,0,0,1,0,0,0,0,1,0\},
    \{0,0,21,0,0,0,0,0,18,2\},\\& \{0,21,0,0,8,0,0,0,12,0\},\{0,0,0,12,0,0,21,0,0,8\},\{0,0,1,0,0,0,0,1,0,0\},\{0,0,0,0,1,0,0,0,0,1\},\\&
    \{0,0,0,18,2,0,0,21,0,0\},\{1,0,0,15,2,0,0,18,0,0\},\{0,1,0,14,2,0,0,17,0,0\},\{1,0,0,0,0,1,0,0,0,0\},\\&
    \{0,0,0,10,0,0,16,1,0,6\},\{0,1,1,0,0,1,0,0,1,0\},\{0,1,0,0,0,0,1,0,0,0\},\{0,16,1,0,6,0,0,0,10,0\},\\&
    \{1,15,0,0,6,0,0,0,9,0\},\{0,0,0,9,0,1,15,0,0,6\},\{2,0,0,12,2,0,0,15,0,0\},\{2,0,2,0,0,0,1,0,2,0\},\\&
    \{0,0,6,0,0,0,9,0,0,4\},\{0,0,7,0,0,1,8,0,1,4\},\{1,1,0,11,2,0,0,14,0,0\},\{0,0,18,0,0,1,0,0,15,2\},\\&
    \{0,0,17,0,0,0,1,0,14,2\},\{0,2,0,10,2,0,0,13,0,0\},\{3,0,0,9,2,0,0,12,0,0\},\{0,0,0,8,0,0,11,2,0,4\},\\&
    \{2,1,0,8,2,0,0,11,0,0\},\{0,11,2,0,4,0,0,0,8,0\},\{2,0,3,0,0,0,6,0,0,2\},\{0,0,0,7,0,1,10,1,0,4\},\\&
    \{0,0,1,2,0,4,2,0,0,2\},\{1,10,1,0,4,0,0,0,7,0\},\{0,0,14,0,0,1,1,0,11,2\},\{0,0,15,0,0,2,0,0,12,2\},\\&
    \{0,0,13,0,0,0,2,0,10,2\},\{1,2,0,7,2,0,0,10,0,0\},\{2,9,0,0,4,0,0,0,6,0\},\{0,0,0,6,0,2,9,0,0,4\},\\&
    \{0,0,2,1,0,5,1,0,0,2\},\{0,10,0,0,4,1,0,5,1,0\},\{0,0,1,2,0,0,5,0,0,2\},\{4,0,0,6,2,0,0,9,0,0\},\\&
    \{1,8,0,1,4,0,0,7,0,0\},\{0,0,3,0,0,6,0,0,0,2\},\{0,3,0,6,2,0,0,9,0,0\},\{1,0,4,0,0,0,5,0,1,2\},\\&
    \{3,1,0,5,2,0,0,8,0,0\},\{0,0,10,0,0,1,2,0,7,2\},\{0,0,11,0,0,2,1,0,8,2\},\{0,0,12,0,0,3,0,0,9,2\},\\&
    \{0,0,9,0,0,0,3,0,6,2\},\{0,9,0,0,4,0,0,6,0,0\},\{3,0,1,0,0,0,2,0,1,0\},\{0,0,0,6,0,0,6,3,0,2\},\\&
    \{0,0,2,1,0,1,4,0,0,2\},\{2,2,0,4,2,0,0,7,0,0\},\{1,0,5,1,0,0,10,0,0,4\},\{4,0,0,0,0,0,3,0,0,0\},\\&
    \{0,3,0,0,0,4,0,0,0,0\},\{0,0,6,0,0,5,0,0,3,2\},\{0,0,5,0,0,4,1,0,2,2\},\{0,0,4,0,0,3,2,0,1,2\},\\&
    \{0,0,3,0,0,2,3,0,0,2\},\{9,0,0,0,2,0,2,2,1,0\},\{0,0,0,5,0,5,2,2,0,2\},\{5,0,0,3,2,0,0,6,0,0\},\\&
    \{0,0,5,0,0,0,4,0,2,2\},\{0,0,9,0,0,4,0,0,6,2\},\{0,0,8,0,0,3,1,0,5,2\},\{0,0,6,0,0,1,3,0,3,2\},\\&
    \{0,0,7,0,0,2,2,0,4,2\},\{1,0,0,1,0,0,1,1,0,0\},\{0,0,0,5,0,1,5,2,0,2\},\{0,7,0,0,2,3,0,2,1,0\},\\&
    \{1,3,0,3,2,0,0,6,0,0\},\{0,2,2,1,0,9,0,0,0,2\},\{0,2,0,1,0,3,0,1,0,0\},\{0,1,0,2,0,2,0,2,0,0\},\\&
    \{0,0,0,3,0,1,0,3,0,0\},\{0,6,3,0,2,0,0,0,6,0\},\{5,2,2,0,2,0,0,0,5,0\},\{8,0,0,0,2,0,1,1,2,0\},\\&
    \{0,0,0,4,0,0,1,4,0,0\},\{0,0,0,4,0,6,1,1,0,2\},\{4,1,0,2,2,0,0,5,0,0\},\{0,1,1,2,0,8,0,0,0,2\},\\&
    \{6,1,1,0,2,0,0,0,4,0\},\{1,5,2,0,2,0,0,0,5,0\},\{0,5,0,0,2,0,0,1,2,0\},\{1,4,0,0,2,0,0,2,1,0\},\\&
    \{2,3,0,0,2,0,0,3,0,0\},\{3,0,2,1,0,0,7,0,0,2\},\{0,0,0,4,0,2,4,1,0,2\},\{0,4,0,2,2,0,0,5,0,0\},\\&
    \{0,5,0,1,2,1,0,4,0,0\},\{0,6,0,0,2,2,0,3,0,0\},\{0,0,0,3,0,7,0,0,0,2\},\{7,0,0,0,2,0,0,0,3,0\},\\&
    \{4,2,0,0,2,0,0,1,2,0\},\{5,1,0,0,2,0,0,2,1,0\},\{6,0,0,0,2,0,0,3,0,0\},\{2,4,1,0,2,0,0,0,4,0\},\\&
    \{1,0,0,3,0,0,6,0,0,2\},\{0,6,0,0,2,1,0,0,3,0\},\{3,2,0,1,2,0,0,4,0,0\},\{0,0,0,3,0,3,3,0,0,2\},\\&
    \{3,3,0,0,2,0,0,0,3,0\}
    \big\}
\end{split}
\end{equation*}
}%

Now, using the formula (\ref{elasticidad}), we conclude that the elasticity of $S$ is $3$.

To know if $S$ has acceptable elasticity, we use Algorithm 28 of \cite{atomicos} which is implemented in 
\url{https://github.com/D-marina/CommutativeMonoids/blob/master/Sumsetssemigroups/sumsetSemigroups.ipynb} of \cite{commutative}. 
Running the commands, 
\begin{verbatim}
    >>> gb14=computationIS([3],[4],[6,12],[7,10,13],[0,3,6,9])
    >>> hasAcceptableElasticity(gb14, debug=True)
    ... 
	    We compute a Groebner basis with the lex ordering of the variables
	    [x2, z1, x1, z2, z3]
	    GroebnerBasis([-x1**4 + x2**3, x1**4*x2**2*z3**2 - z1*z2**2,
	    -x1**3*z1*z2 + x2**2*z2**2, -x1**3*z1*z3 + x2**2*z2*z3,
	    -x1*z2**2 + x2*z1*z2, -x1*z2*z3 + x2*z1*z3,
	    x1**5*x2*z3**2 - z1**2*z2, -x1**2*z1**2*z2 + x2*z2**3,
	    -x1**2*z1**2*z3 + x2*z2**2*z3, -x1**6*z2*z3**2 + z1**3*z2,
	    -x1**6*z3**3 + z1**3*z3, x1**7*z3**2 - z2**3],
	    x2, z1, x1, z2, z3, domain='ZZ', order='lex')
	    Once removed the variables [x2, z1], we obtain
	    GroebnerBasis([x1**7*z3**2 - z2**3], x1, z2, z3, domain='ZZ',
	    order='lex')
    ... 
    True
\end{verbatim}%
we obtain that the monoid has acceptable elasticity. 
Moreover, from the above output, we see that the binomial $x_1^7 x_3^2-z_2^3$ belongs to $I_S$. 
Since the quotient of the addition of the exponents of these two monomials is $(7+2)/3=3=\rho(S)$, the element $7\otimes \{3\}+2\otimes\{0,3,6,9\}=3\otimes \{7,10,13\}$ reaches the elasticity.

%For every element of its ideal, we concatenate the exponents of its positive and negative monomials arranged by $\{x_1,\dots,z_1,\dots,\}$. We obtain the following subset of $\N^{10}$:

% {\tiny 
% \begin{equation*}
% \begin{split}
% R=\{& \{4, 0, 0, 0, 0, 0, 3, 0, 0, 0 \},
% \{ 0, 0, 0, 3, 0, 3, 3, 0, 0, 2 \},
% \{ 0, 2, 0, 2, 0, 3, 0, 1, 1, 0 \},
% \{ 3, 0, 1, 0, 1, 0, 2, 0, 1, 1 \},\\&
% \{ 0, 0, 3, 0, 1, 2, 3, 0, 0, 3 \},
% \{ 0, 1, 0, 3, 0, 2, 0, 2, 1, 0 \},
% \{ 2, 0, 2, 0, 1, 0, 1, 0, 2, 1 \},
% \{ 0, 0, 2, 1, 0, 1, 4, 0, 0, 2 \},\\&
% \{ 0, 0, 0, 4, 0, 1, 0, 3, 1, 0 \},
% \{ 0, 0, 0, 3, 1, 1, 0, 3, 0, 1 \},
% \{ 1, 0, 0, 2, 0, 0, 1, 1, 1, 0 \},
% \{ 1, 0, 0, 1, 1, 0, 1, 1, 0, 1 \},\\&
% \{ 0, 5, 0, 0, 2, 0, 0, 1, 2, 0 \},
% \{ 0, 0, 5, 1, 0, 0, 4, 0, 3, 2 \},
% \{ 0, 0, 5, 0, 1, 0, 4, 0, 2, 3 \},
% \{ 0, 0, 9, 1, 0, 0, 3, 0, 7, 2 \},\\&
% \{ 0, 0, 9, 0, 1, 0, 3, 0, 6, 3 \},
% \{ 0, 0, 13, 1, 0, 0, 2, 0, 11, 2 \},
% \{ 0, 0, 13, 0, 1, 0, 2, 0, 10, 3 \},
% \{ 0, 0, 0, 5, 0, 0, 1, 4, 1, 0 \},\\&
% \{ 0, 0, 0, 4, 1, 0, 1, 4, 0, 1 \},
% \{ 0, 0, 17, 1, 0, 0, 1, 0, 15, 2 \},
% \{ 0, 0, 17, 0, 1, 0, 1, 0, 14, 3 \},
% \{ 0, 0, 21, 1, 0, 0, 0, 0, 19, 2 \},\\&
% \{ 0, 0, 21, 0, 1, 0, 0, 0, 18, 3\} \}.
% \end{split}
% \end{equation*}
% }
%In the set $R$ there are no elements with support contained in any of the elements of $HB$. Thus, the monoid $S$ has not acceptable elasticity.
\end{example}

We see now an example of sumset semigroup without acceptable elasticity.

\begin{example}\label{ex15}
Let $S$ be the semigroup 
$$\big\langle \{0,3\},\{0,4\},\{7,10,11,13,14,15,17,18,19,1,22,25\} \big\rangle.$$
Analogously as in the preceding example, we use function {\tt hasAcceptableElasticity} to check if $S$ has acceptable elasticity.
The display output of this function shows some steps of Algorithm 28 of \cite{atomicos} and returns {\tt False}, that is, the semigroup has not acceptable elasticity.
\begin{verbatim}
    >>> gb15=computationIS([0,3],[0,4],
            [7,10,11,13,14,15,17,18,19,1,22,25])
    >>> hasAcceptableElasticity(gb15, debug=True)  
    Positive cone of M (the semigroup is strongly reduced): []
    Equations of M
    [[3, 4, 0], [0, 0, 1]]
    Matrix (A|-A) (equations of M \cap N^n, n=5):
    [[3, 4, 0, -3, -4, 0], [0, 0, 1, 0, 0, -1]]
    Generator system of M \cap N^n (number of generators 5):
    [[0, 0, 1, 0, 0, 1], [0, 1, 0, 0, 1, 0], [0, 3, 0, 4, 0, 0], 
    [1, 0, 0, 1, 0, 0], [4, 0, 0, 0, 3, 0]]
    Elasticity of S: 4/3
    Atoms of A(I_M) that reach the elasticity: 
        [[4, 0, 0, 0, 3, 0]]
    S has not acceptable elasticity (the monoid C is empty, 
    see step 6 of algorithm in Algorithm 28 in
    http://doi.org/10.1007/s00233-002-0022-4)
    False
\end{verbatim}
\end{example}

%All examples of this work have used the library \cite{commutative}. Examples \ref{ex14} and \ref{ex15} are illustrated in the notebook found at \url{https://github.com/D-marina/CommutativeMonoids/blob/master/Sumsetssemigroups/sumsetSemigroups.ipynb}.

{\bf Acknowledgements}. The authors were supported partially by Junta de Andaluc\'{\i}a research groups FQM-343 and FQM-366, and by the project MTM2017-84890-P (MINECO/FEDER, UE).

\end{document}